\theoremstyle{plain}
\newtheorem{theorem}{Theorem}[section]
\newtheorem{proposition}[theorem]{Proposition}
\newtheorem{lem}[theorem]{Lemma}
\newtheorem*{ques*}{Question}
\theoremstyle{definition}
\newtheorem{defn}[theorem]{Definition}
\theoremstyle{remark}
\DeclareMathOperator{\hdim}{dim_H}
\DeclareMathOperator{\pdim}{dim_P}
\DeclareMathOperator{\edim}{dim_e}
\newcommand{\R}{\mathbb R}
\numberwithin{equation}{section}
\title[Distinct dimensions for attractors of iterated function systems]{Distinct dimensions for attractors of iterated function systems}
\title[A non-sticky Kakeya set of Lebesgue measure zero]{A non-sticky Kakeya set of Lebesgue measure zero}
\author{Chun-Kit Lai}
\address{
	Department of Mathematics\\
	San Francisco State University\\
	1600 Holloway Avenue, San Francisco, CA 94132
}
\email{cklai@sfsu.edu}
\author{Adeline E. Wong}
\address{
	Department of Mathematics\\
	San Francisco State University\\
	1600 Holloway Avenue, San Francisco, CA 94132
}
\email{awong47@sfsu.edu}
\begin{document}

\begin{abstract}
The Kakeya set conjecture in $\R^3$ was recently resolved by Wang and Zahl. The distinction between sticky and non-sticky Kakeya sets plays an important role in their proof. Although the proof did not require the Kakeya set to be Lebesgue measure zero, measure zero Kakeya sets are the crucial case whose study is required to resolve the conjecture. In this paper, we explicitly construct a non-sticky Kakeya set of Lebesgue measure zero in $\R^2$ (and hence in any dimension). We also construct non-trivial sticky and non-sticky Kakeya sets in high dimension that are not formed by taking the Cartesian product of a 2-dimensional Kakeya set with $\R^{d-2}$, and we verify that both Kakeya sets have Hausdorff dimension $d$. 
    
\end{abstract}

\maketitle
\section{Introduction}
A {\bf Kakeya set} (or a Bescovitch set) in $\R^d$ is a compact set containing a unit line segment in every direction. The {\it Kakeya set conjecture} states that every Kakeya set has Hausdorff dimension $d$. This conjecture is one of the most notoriously difficult, and significant, conjectures in harmonic analysis and geometric measure theory, and it is implied by several other important conjectures, such as the Fourier restriction conjecture and Bochner-Riesz conjecture in harmonic analysis. For more background, readers can consult the books \cite{Mattila_2015,W03}.

The conjecture can be proved fairly easily in $\R^2$, which Davis did in the 1970s \cite{Davis71} (see also \cite{Mattila_2015}). Recently, the conjecture was resolved in $\R^3$ by Wang and Zahl via a series of seminal works \cite{wang2022stickykakeyasetssticky,wang2025assouaddimensionkakeyasets,wang2025volumeestimatesunionsconvex}. One of the key ingredients in their proof was to distinguish two classes of Kakeya sets known as the sticky and the non-sticky Kakeya sets. The stickiness property was first formulated by Katz, {\L}aba, and Tao in early 2000 \cite{KLT2000}. In 2014, Tao laid down a strategy to attack the Kakeya conjecture in his blog \cite{Taoblog}; in summary, if a hypothetical counterexample to the Kakeya set conjecture existed, the set ought to be planey, grainy, and sticky. While planiness and graininess were well-characterized in \cite{BCT2006} and \cite{Guth2016} (see also \cite{KZ2019}), the correct formulation of stickiness was not clear at that time. Wang and Zahl advanced Tao's approach by defining an explicit notion of stickiness and showed first that a sticky Kakeya set in $\R^3$ must be of full Hausdorff dimension \cite{wang2022stickykakeyasetssticky}, before settling the conjecture on $\R^3$ in full generality \cite{wang2025volumeestimatesunionsconvex}. In \cite{wang2022stickykakeyasetssticky}, they also proposed the 

\smallskip

\noindent{\it Sticky Kakeya set conjecture:}  All sticky Kakeya sets in $\R^d$ must have Hausdorff dimension $d.$

\smallskip

\noindent Even in this restricted version, the conjecture is open for $d \ge  4$, with some partial results for $\R^4$ in \cite{choudhuri2024improvedboundhausdorffdimension}.

To describe stickiness precisely, we will need to define a space of affine lines. An affine line on $\R^d$, not lying in the hyperplane $\{(x_1,\cdots, x_d): x_d = 0\}$,  can be parametrized by 
\begin{equation}\label{eqL_va}
L_{{\bf v},{\bf a}}(t) = ({\bf a},0)+ t({\bf v}, 1), t\in\R, 
\end{equation}
where ${\bf a}, {\bf v}\in \R^{d-1}$. Let ${\mathcal A}$ be the collection of all such lines. Technically, we are missing the lines that lie on the hyperplane $x_d = 0$. However, the missing directions only form a lower-dimensional set in the space of directions, so there is no loss of generality to disregard this set of lines.   We now endow ${\mathcal A}$ with a metric on $\R^{2d-2}$ by 
$$d(L_{{\bf v},{\bf a}}, L_{{\bf w},{\bf b}}) = \|{\bf a}-{\bf b}\|+\|{\bf v}-{\bf w}\|.$$
Suppose that ${\mathbb K}$ is a Kakeya set in $\R^d$. Then for all ${\bf v}\in\R^{d-1}$, there exists ${\bf a}_{\bf v}$ such that $L_{{\bf v},{}\bf a_v}\cap {\mathbb K}$ contains a unit line segment. Let 
$$
{\mathcal A}_{\mathbb K} = \{({\bf v},{\bf a_v}): L_{{\bf v},{\bf a_v}}\cap {\mathbb K}  \ \mbox{contains a unit line segment}\}. 
$$
Because the set has full dimension in ${\bf v}$ by the definition of Kakeya set, the Hausdorff dimension of ${\mathcal A}_{\mathbb K}$ in $\R^{2d-2}$ is at least $d-1$. 
\begin{defn}
We say that a Kakeya set ${\mathbb K}$  is a {\bf sticky Kakeya set} if the packing dimension of  ${\mathcal A}_{\mathbb K}$ is equal to $ d-1$. Otherwise, ${\mathbb K}$  is a {\bf non-sticky Kakeya set} (i.e., the packing dimension of ${\mathcal A}_{\mathbb K}$ is strictly larger than $d-1$).
\end{defn}

As a simple example, the unit disk is a non-sticky Kakeya set, since each direction corresponds to an interval of initial points, making the set ${\mathcal A}_{\mathbb K}$ dimension 2, and indeed of positive Lebesgue measure. On the other hand, the classic example of a measure zero Kakeya set is the one based on the four-corner self-similar Cantor set $K$, which has Hausdorff dimension 1. This construction can be found in \cite[Chapter 7]{Stein-real} (see also Section \ref{section4}). In this construction, ${\mathcal A}_{\mathbb K} = K$, which also has packing dimension equal to $1$, hence $K$ is a sticky Kakeya set.   % In higher dimension, the only known measure zero Kakeya set, to the best of our knowledge, is constructed by simply taking the cartesian product of the Kakeya set on $\R^2$ with $[0,1]^{d-2}$. 

Although a restriction to Kakeya sets of Lebesgue measure zero was not used in the proof of the Kakeya conjecture in $\R^3$, it is clear that we can, without loss of generality, assume the set has measure zero (since sets with positive measures have full Hausdorff dimension). As there appear to be no known examples of a non-sticky Kakeya set of Lebesgue measure zero, one might conjecture that non-sticky Kakeya sets must have positive Lebesgue measure, so that the sticky Kakeya set conjecture and the original Kakeya set conjecture are equivalent. In this paper, we show that this is not the case; the main result of this paper is

\begin{theorem}\label{main theorem}
    There exists a non-sticky Kakeya set of Lebesgue measure zero in $\R^2$, and hence in $\R^d$, for all $d>2$.  
\end{theorem}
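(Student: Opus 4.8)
The plan is to construct explicitly a measure-zero Kakeya set in $\R^2$ whose line-parameter set ${\mathcal A}_{\mathbb K}$ has packing dimension strictly larger than $1$. The natural starting point is the classical four-corner Cantor construction described in the excerpt, which produces a measure-zero sticky Kakeya set with ${\mathcal A}_{\mathbb K}$ equal to the four-corner Cantor set $K$ of packing dimension $1$. The key observation is that stickiness there comes from a \emph{rigid} pairing: each direction is served by a single well-chosen initial point, so the parameter set is as thin as possible. To defeat stickiness I would instead arrange that, for a large (in the dimension-theoretic sense) set of directions ${\bf v}$, the set of admissible initial points ${\bf a}$ realizing a unit segment in direction ${\bf v}$ is itself a positive-dimensional (fractal) set, while the total union of these segments still has planar Lebesgue measure zero.

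Concretely, the construction I would carry out proceeds in stages. First I would fix the base: cover all directions using a Cantor-like set of slopes $\{{\bf v}\}$ (one can even allow a full interval of directions, since the Kakeya property only requires \emph{some} segment in every direction, and directions not in the main family can be handled by a thin measure-zero correction set). Second, for each direction I would allow not one but a whole Cantor set of parallel translates, i.e. many admissible initial points ${\bf a}$, chosen so that the resulting segments are translated copies inside a self-similar ``thickened'' Cantor target. Third, I would verify the measure-zero property: by keeping the set of translates a Cantor set (rather than an interval) and controlling the total measure at each scale of an iterated-function-system/Venetian-blind-type construction, the union of all segments can be forced to have two-dimensional Lebesgue measure zero. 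This is the standard mechanism by which Kakeya sets of measure zero exist at all, so the measure estimate should follow from a geometric-series bound on the areas at each stage.

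The decisive step is the packing-dimension computation for ${\mathcal A}_{\mathbb K} \subset \R^2$. Here ${\mathcal A}_{\mathbb K}$ fibers over the direction set, and over a positive-dimensional set of directions the fiber (the admissible ${\bf a}$'s) is itself positive-dimensional by construction. I would bound the packing dimension from below using the product/fiber structure: if the direction set has dimension $s$ and a positive-dimensional set of those directions each carries a fiber of dimension $r>0$, then standard lower bounds for packing dimension of sets with fibered structure (e.g. via a Frostman-type measure built as a product of the direction measure and the fiber measures, or directly by estimating packing numbers of product sets) give $\pdim {\mathcal A}_{\mathbb K} \ge s + r$. The point is to choose the IFS parameters so that $s + r > 1$ strictly, which makes ${\mathbb K}$ non-sticky by definition, while simultaneously $r$ and the scale-by-scale measure bookkeeping keep the planar measure at zero.

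The main obstacle I expect is the simultaneous control of three competing requirements: (i) the set must be a genuine Kakeya set, i.e. \emph{every} direction must be covered, not just the Cantor set of directions, which forces an auxiliary argument (adding a small measure-zero family, or arranging the slope Cantor set to be a full interval via an affine/Venetian-blind shearing) without spoiling the measure-zero property; (ii) the union of all the (now much more numerous) parallel translates must still have Lebesgue measure zero, which is in tension with adding extra translates per direction and requires a careful overlapping-versus-disjointness analysis in the area estimate; and (iii) the lower bound on $\pdim {\mathcal A}_{\mathbb K}$ must be established rigorously, since packing dimension lower bounds for fibered sets are more delicate than the corresponding Hausdorff bounds (packing dimension does not satisfy as clean a product inequality, so I would likely prove the bound directly through packing/box-counting of the parameter set at each scale rather than invoking a black-box product formula). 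Reconciling the measure-zero demand with the strict dimension excess $s+r>1$ is where the construction has to be balanced most carefully.
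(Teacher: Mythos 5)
The decisive step of your proposal is exactly where it breaks. You propose a parameter set ${\mathcal A}_{\mathbb K}$ fibered over a direction set of dimension $s$, with each fiber of admissible initial points of dimension $r>0$, and you argue $\pdim {\mathcal A}_{\mathbb K} \ge s+r > 1$ via a Frostman-type product measure. But that same product measure satisfies $\lambda(B(z,\delta)) \lesssim \delta^{s+r'}$ for every $r'<r$ at \emph{every} scale $\delta$, so it is a Frostman measure in the Hausdorff sense as well: your construction forces $\hdim {\mathcal A}_{\mathbb K} \ge s+r > 1$, not merely a packing bound. This is fatal, because of the point-line duality that your own setup uses: the horizontal slice of the union of segments at height $t$ is exactly $\{a+tv : (v,a)\in{\mathcal A}_{\mathbb K}\}\times\{t\}$, i.e.\ (up to an affine rescaling) the orthogonal projection $\pi_t({\mathcal A}_{\mathbb K})$. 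By Marstrand's projection theorem, any planar set of Hausdorff dimension strictly greater than $1$ has projections of \emph{positive} Lebesgue measure in almost every direction; hence almost every slice of your set has positive length, and Fubini gives the Kakeya set positive area. The tension you flag in items (ii)--(iii) is therefore not a matter of delicate bookkeeping or geometric-series area estimates: a genuine fibered structure with $s+r>1$ holding at all scales is incompatible with measure zero, as a theorem.

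The way out --- and this is the key idea in the paper that your proposal is missing --- is that the dual set must have Hausdorff dimension \emph{exactly} $1$ (indeed it must be an irregular $1$-set: $0<{\mathcal H}^1<\infty$ with two measure-zero projections, so that the Besicovitch projection theorem, Theorem \ref{BPT}, yields measure-zero projections in almost every direction and hence measure-zero slices), while its \emph{packing} dimension strictly exceeds $1$. Such a set cannot have a product or fibered structure at all scales; the excess dimension has to be confined to a sparse sequence of scales, exploiting the fact that packing dimension is a $\limsup$ quantity while Hausdorff measure sees the best coverings. This is precisely what the paper's Moran fractal-square construction does (Proposition \ref{prop3.3}): it alternates ``dense'' stages, which inflate box counts along a sparse sequence of scales (giving $\pdim C>1$ via the entropy bound of Proposition \ref{prop-fractal-cube}), with ``sparse'' stages that keep ${\mathcal H}^1(C)$ finite, and it enforces exact diagonal overlaps so that two projections are null and Besicovitch applies. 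Your instinct to add many initial points per direction is the right intuition for non-stickiness, but it must be implemented scale-by-scale in this alternating, non-product fashion; implemented as a true fiber structure, it proves the opposite of what you need.
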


A construction for $\R^d$ with $d>2$ can be made by simply taking the Cartesian product of a two-dimensional Kakeya set with $\R^{d-2}$.  In the literature, apart from this trivial construction, there appear to be no other explicit examples of measure zero Kakeya sets in high dimension. It would be more interesting to construct additional genuine examples of Kakeya sets of measure zero, in the sense that they are not formed by this Cartesian product procedure. In Section \ref{section4}, we provide simple constructions of such Kakeya sets for both the sticky and non-sticky cases. Moreover, we show that they indeed fulfill the Kakeya set conjecture.

In Section \ref{section2}, we will present the necessary tools for our construction. In Section \ref{section3}, we prove Theorem \ref{main theorem}. In Section \ref{section4}, we provide the non-trivial Kakeya sets mentioned in the previous paragraph.

%Conversely, if $K\subset \R^{2d-2}$ is a compact set whose projection onto the first $d-1$ coordinates contain a non-empty interior. 

\section{Preliminaries}\label{section2}
This section will present the basic tools for our construction. Throughout our paper, ${\mathcal H}^{\alpha}$ denotes the $\alpha$-Hausdorff measure; $\hdim$ and $\pdim$ denote the Hausdorff and packing dimensions, whose definitions can be found in \cite{BKS2024}, of a set; and $m^d(E)$ denotes the $d$-dimensional Lebesgue measure of a measurable set $E$. 

\subsection{General construction of Kakeya sets.} We can produce a Kakeya set in $\R^d$ by specifying a compact set $K\subset \R^{2d-2}$ to be ${\mathcal A}_{\mathbb K}$. To be precise, for each $x\in K$, let us write $x = ({\bf v},{\bf a})$, where ${\bf v,a}\in \R^{d-1}$, and $L_{{\bf v},{\bf a}}$ for the line defined in (\ref{eqL_va}) with $t\in[0,1]$. 

\begin{proposition}\label{prop0}
    Let $K$ be a compact set in $\R^{2d-2}$ such that $\pi_0(K)$ has non-empty interior, where $\pi_0$ is the projection onto the first $d-1$ coordinates.  Then some finite union of rotated copies of 
    $$
    {\mathbb K}_0 = \bigcup_{({\bf v},{\bf a})\in K} L_{{\bf v},{\bf a}}
    $$
    is a Kakeya set in $\R^d$. 
\end{proposition}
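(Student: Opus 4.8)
The plan is to verify the two defining properties of a Kakeya set---compactness, and the presence of a unit line segment in every direction---for a suitable finite union of rotated copies of ${\mathbb K}_0$. First I would record that ${\mathbb K}_0$ itself is compact: the map $({\bf v},{\bf a},t)\mapsto ({\bf a},0)+t({\bf v},1)$ is continuous on the compact set $K\times[0,1]$, so ${\mathbb K}_0$ is its continuous image and hence compact. Next, for each $({\bf v},{\bf a})\in K$ the segment $L_{{\bf v},{\bf a}}([0,1])$ has length $\sqrt{\|{\bf v}\|^2+1}\ge 1$, so it contains a unit sub-segment pointing in the direction $({\bf v},1)/\|({\bf v},1)\|$. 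Thus ${\mathbb K}_0$ already contains a unit segment in every direction of the form $({\bf v},1)$ with ${\bf v}\in\pi_0(K)$.

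The crucial observation is that these directions form an open subset of the space of directions. Identifying directions with points of the unit sphere $S^{d-1}$ (recalling that a segment in direction ${\bf u}$ is also a segment in direction $-{\bf u}$), the map ${\bf v}\mapsto ({\bf v},1)/\|({\bf v},1)\|$ is a homeomorphism of $\R^{d-1}$ onto the open upper hemisphere. Hence the image $U$ of the non-empty interior of $\pi_0(K)$ is a non-empty relatively open subset of $S^{d-1}$, and every direction in $U$ is realized by a unit segment of ${\mathbb K}_0$. I would then exploit the transitivity of the rotation group $\mathrm{SO}(d)$ on $S^{d-1}$ together with the compactness of the sphere. Fixing some ${\bf u}_0\in U$, for an arbitrary ${\bf u}\in S^{d-1}$ I choose $R_{\bf u}\in\mathrm{SO}(d)$ with $R_{\bf u}{\bf u}_0={\bf u}$; then ${\bf u}\in R_{\bf u}(U)$, and $R_{\bf u}(U)$ is open since $R_{\bf u}$ is a homeomorphism. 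Because rotations preserve lengths and carry segments to segments, the rotated copy $R_{\bf u}({\mathbb K}_0)$ contains a unit segment in every direction of $R_{\bf u}(U)$. The family $\{R_{\bf u}(U)\}_{{\bf u}\in S^{d-1}}$ is thus an open cover of the compact space $S^{d-1}$, and extracting a finite subcover $R_{{\bf u}_1}(U),\dots,R_{{\bf u}_n}(U)$ gives rotations whose copies jointly realize every direction. Setting ${\mathbb K}=\bigcup_{i=1}^n R_{{\bf u}_i}({\mathbb K}_0)$ then produces a finite union of compact sets---hence compact---containing a unit line segment in every direction, i.e.\ a Kakeya set. Observe that although the parametrization (\ref{eqL_va}) omits the directions lying in the hyperplane $x_d=0$, the rotations recover these as well, since the finite subcover exhausts all of $S^{d-1}$.

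The only genuinely delicate step is the openness-and-covering argument: one must verify that the directions covered by ${\mathbb K}_0$ really contain a non-empty open set (this is precisely where the hypothesis that $\pi_0(K)$ has non-empty interior is used) and then pass, via compactness of $S^{d-1}$ and transitivity of $\mathrm{SO}(d)$, from the open cover by rotated images to a finite union. The remaining ingredients---compactness of ${\mathbb K}_0$ and of the finite union, and the length estimate yielding unit sub-segments---are routine.
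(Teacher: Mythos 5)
Your proof is correct and follows essentially the same route as the paper: establish compactness of ${\mathbb K}_0$, observe that the realized directions cover a non-empty open subset of $S^{d-1}$ via the normalization map, and use compactness of the sphere to pass to a finite union of rotated copies. The only cosmetic differences are that you prove compactness as a continuous image of $K\times[0,1]$ (the paper uses a sequential check) and you spell out the open-cover/finite-subcover step that the paper leaves implicit.
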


\begin{proof}
    First, the closedness of ${\mathbb K}_0$ follows from a direct check using sequences. Since ${\mathbb K}_0$ is clearly bounded, it must be compact. 

Note that the projection $\pi_0(K)$ contains an open ball $B$ in $\R^{d-1}$. Thus, by assumption, ${\mathbb K}_0$ contains directions of the form $\{({\bf v}, 1): {\bf v}\in B\}$. Its image under the normalization map covers an open set on the $(d-1)$-dimensional unit sphere.  As the unit sphere is compact, we can rotate ${\mathbb K}_0$ finitely many times, taking the union, so that the resulting set of directions is the whole sphere. 
\end{proof}

For each $\lambda\in \R$, let $\pi_{\lambda}(x,y) = x+\lambda y$ denote the projection onto the 1-dimensional subspace with slope $\lambda$.   We will use the Bescovitch projection theorem (see \cite[Corollary 6.14]{falconer1985geometry}), which we recall as follows:

\begin{theorem}\label{BPT}
    Let $K$ be a compact set in $\R^2$ such that $0<{\mathcal H}^1(K)<\infty$. Suppose there are two distinct  $\lambda_1,\lambda_2 \in \R$ such that the orthogonal projection, $\pi_{\lambda_i}(K)$, has zero Lebesgue measure for each $i =1,2$. Then for almost all $\lambda\in\R$, the orthogonal projections have zero Lebesgue measure. 
\end{theorem}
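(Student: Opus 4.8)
The plan is to recognize this as the two-direction form of the Besicovitch--Federer projection theorem: the hypothesis ought to force $K$ to be purely $1$-unrectifiable, after which null projections in almost every direction are automatic. First I would reconcile the ``slope'' maps with genuine orthogonal projections. Writing $\mathbf e_\lambda=(1,\lambda)/\sqrt{1+\lambda^2}$, one has $\pi_\lambda=\sqrt{1+\lambda^2}\,\langle\,\cdot\,,\mathbf e_\lambda\rangle$, so $\pi_\lambda(K)$ is a nonzero rescaling of the orthogonal projection of $K$ onto the line spanned by $\mathbf e_\lambda$; rescaling preserves Lebesgue-nullity, and $\lambda\mapsto\mathbf e_\lambda$ is a smooth injection of $\R$ onto all line directions but the vertical one, carrying null sets of slopes to null sets of directions and back. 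Hence it suffices to prove the statement for orthogonal projections $P_\theta$ onto lines through the origin, with exceptional directions measured on the circle.

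Next I would invoke the structure theory for sets of finite length: since $0<\mathcal H^1(K)<\infty$, write $K=K_r\cup K_u$ with $\mathcal H^1$-measurable pieces, $K_r$ being $1$-rectifiable and $K_u$ purely $1$-unrectifiable. The Besicovitch--Federer theorem gives $\mathcal H^1(P_\theta K_u)=0$ for almost every $\theta$, so the directions in which $K$ has a non-null projection can form a positive-measure set only if $\mathcal H^1(K_r)>0$. Proving the theorem therefore reduces to showing that the two-direction hypothesis forces $\mathcal H^1(K_r)=0$.

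The crux, and the step I expect to be the main obstacle, is a sharp positivity statement for the rectifiable part: a rectifiable set of positive length has a null orthogonal projection in \emph{at most one} direction. I would derive this from the tangent structure of $K_r$, which has an approximate tangent $\tau(x)$ at $\mathcal H^1$-a.e.\ $x$, together with the area formula: if $\mathcal H^1(P_\theta K_r)=0$, then the projection Jacobian $|\langle\tau(x),\mathbf e_\theta\rangle|$ vanishes for $\mathcal H^1$-a.e.\ $x$, i.e.\ the tangent field is perpendicular to $\mathbf e_\theta$ almost everywhere. As $\tau$ is a.e.\ single-valued, this can hold for at most one $\theta$. Since $P_\theta K_r\subseteq P_\theta K$, the two distinct directions coming from $\lambda_1\neq\lambda_2$ would be two distinct null-projection directions for $K_r$, which is impossible unless $\mathcal H^1(K_r)=0$.

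Combining these, $K$ agrees with its purely unrectifiable part up to an $\mathcal H^1$-null set, so $\mathcal H^1(P_\theta K)=0$ for a.e.\ $\theta$; pulling back through the slope reparametrization then gives $\mathcal H^1(\pi_\lambda K)=0$, equivalently zero Lebesgue measure, for a.e.\ $\lambda$. The only genuinely analytic inputs are the unrectifiable half of Besicovitch--Federer and the ``at most one direction'' positivity for rectifiable sets; the rest is bookkeeping about the slope-to-direction change of variables.
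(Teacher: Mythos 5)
Your proposal is correct, but it is worth being clear about what it is being compared against: the paper never proves this theorem at all; it simply quotes it from the literature (Falconer, \emph{The Geometry of Fractal Sets}, Corollary 6.14). So your argument is not an alternative to a proof in the paper but a reconstruction of the standard proof behind that citation, and it is faithfully the classical route. The reduction from slope maps to orthogonal projections is fine, since $\pi_\lambda(x,y)=x+\lambda y$ is $\sqrt{1+\lambda^2}$ times the projection onto the span of $(1,\lambda)/\sqrt{1+\lambda^2}$, and the slope-to-direction map is a smooth injection missing only the vertical direction, so null sets of slopes and of directions correspond. The decomposition $K=K_r\cup K_u$ and the application of Besicovitch--Federer to $K_u$ are exactly Falconer's Theorem 6.13 (Besicovitch's theorem on irregular $1$-sets). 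The crux you single out --- that a $1$-rectifiable set of positive length can have a Lebesgue-null projection in at most one direction --- is true and is Falconer's Theorem 6.10; your justification is sound: if $P_\theta K_r$ is null, the area formula for the Lipschitz map $P_\theta$ restricted to the rectifiable set $K_r$ (with tangential Jacobian $|\langle\tau(x),\mathbf{e}_\theta\rangle|$) forces $\tau(x)\perp \mathbf{e}_\theta$ for $\mathcal{H}^1$-a.e.\ $x$, and since two distinct directions $\mathbf{e}_{\theta_1},\mathbf{e}_{\theta_2}$ span $\R^2$, having two null directions would force the unit tangent to vanish a.e., i.e.\ $\mathcal{H}^1(K_r)=0$. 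The conclusion then follows because $P_\theta K=P_\theta K_r\cup P_\theta K_u$, and projections of $\mathcal{H}^1$-null sets are null ($P_\theta$ is $1$-Lipschitz). In short: the proof is complete and correct; it buys self-containedness at the cost of invoking the rectifiability structure theory (decomposition, a.e.\ approximate tangents, the area formula, and Besicovitch--Federer), which is precisely the machinery the paper outsources to its reference.
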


The following proposition gives us the criteria for the compact set $K$ to produce our desired non-sticky Kakeya set of measure zero:
\begin{proposition}\label{prop-K}
Let $K\subset[0,1]^2$ be a compact set such that 
\begin{enumerate}
    \item $\pi_0(K)$ contains an interval;
    \item $ \pdim K>1$;
    \item ${\mathcal H}^1(K)<\infty$;
    \item There are two distinct $\lambda_1,\lambda_2$ such that the $\pi_{\lambda_i}(K)$ has Lebesgue measure zero. 

\end{enumerate}
Then some finite union of rotated copies of 
$$
{\mathbb K}_0 = \bigcup_{(v,a)\in K} L_{v,a}
$$
form a compact non-sticky Kakeya set of measure zero. 
\end{proposition}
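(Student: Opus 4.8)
The plan is to verify separately the three assertions bundled into the conclusion—that the resulting set is a compact Kakeya set, that it is non-sticky, and that it is Lebesgue-null—only the last requiring genuine work. First I would dispose of the Kakeya property and non-stickiness. By hypothesis (1), $\pi_0(K)$ contains an interval and hence has non-empty interior in $\R^{d-1}=\R$, so Proposition \ref{prop0} applies verbatim and yields a finite union $\mathbb K=\bigcup_j R_j(\mathbb K_0)$ of rotated copies that is a compact Kakeya set; enlarging the union by the identity rotation if necessary, I may assume $\mathbb K_0\subseteq\mathbb K$. For non-stickiness, note that each $(v,a)\in K$ parametrizes the segment $\{L_{v,a}(t):t\in[0,1]\}$, of length $\sqrt{1+v^2}\ge 1$, which lies in $\mathbb K_0\subseteq\mathbb K$; thus $(v,a)\in\mathcal A_{\mathbb K}$ and so $K\subseteq\mathcal A_{\mathbb K}$. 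Since the metric on the line space $\mathcal A$ is bi-Lipschitz equivalent to the Euclidean metric on the $(v,a)$-parameters, packing dimension is unchanged, and monotonicity together with hypothesis (2) gives $\pdim\mathcal A_{\mathbb K}\ge\pdim K>1=d-1$, which is exactly non-stickiness.

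The substantive step is to show $m^2(\mathbb K_0)=0$; since rotations preserve Lebesgue measure and a finite union of null sets is null, this suffices for the whole union $\mathbb K$. I would slice $\mathbb K_0$ by the horizontal lines $\{y=t\}$. The slice at height $t\in[0,1]$ is $S_t=\{a+tv:(v,a)\in K\}=P_t(K)$, where $P_t(v,a)=tv+a$; as the continuous image of a compact set it is measurable, so Fubini (Cavalieri's principle) gives $m^2(\mathbb K_0)=\int_0^1 m^1(S_t)\,dt$, and it is enough to prove $m^1(S_t)=0$ for almost every $t$. The key observation is that each slice is, up to an irrelevant dilation, an orthogonal projection of the dual set $K$: for $t\neq0$ one has $P_t(v,a)=t\big(v+t^{-1}a\big)=t\,\pi_{1/t}(v,a)$, so $m^1(S_t)=0$ if and only if $m^1(\pi_{1/t}(K))=0$. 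As $t$ runs through $(0,1]$ the slope $\lambda=1/t$ runs through $[1,\infty)$, a set of positive measure. Now, if $\mathcal H^1(K)=0$ every projection is already null because $\pi_\lambda$ is $1$-Lipschitz; otherwise $\mathcal H^1(K)>0$, so $0<\mathcal H^1(K)<\infty$ by (3), and hypothesis (4) furnishes two distinct directions whose projections are null, whence Theorem \ref{BPT} yields $m^1(\pi_\lambda(K))=0$ for almost every $\lambda\in\R$, in particular for almost every $\lambda\in[1,\infty)$. Either way $m^1(S_t)=0$ for a.e.\ $t\in(0,1]$, and the Fubini identity closes the argument.

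The main obstacle—really the one idea the proof turns on—is this duality between the Kakeya set and its parameter set: the height-$t$ slice of the line-union $\mathbb K_0$ is exactly a scaled orthogonal projection of $K$, which is precisely what converts the two hypothesized null directions of condition (4), via the Besicovitch projection theorem, into null slices at almost every height. The one technical point to watch is matching the slope conventions of $P_t$ and $\pi_\lambda$ so that the relevant range of $\lambda$ carries positive measure; the measurability of the slices and the Fubini bookkeeping are then routine.
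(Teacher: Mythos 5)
Your proof is correct and follows essentially the same route as the paper: Proposition \ref{prop0} for the Kakeya property, the containment $K\subseteq\mathcal{A}_{\mathbb{K}}$ plus monotonicity of packing dimension for non-stickiness, and slices-as-projections combined with Theorem \ref{BPT} and Fubini for measure zero. You are in fact slightly more careful than the paper on one point: you identify the height-$t$ slice as the dilate $t\,\pi_{1/t}(K)$ and handle the change of variables $\lambda=1/t$ (and the degenerate case $\mathcal{H}^1(K)=0$), whereas the paper writes the slice directly as $\pi_t(K)$, which is accurate only up to a coordinate swap in the $(v,a)$-parametrization and a harmless rescaling.
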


\begin{proof}
    By Proposition \ref{prop0}, this set is a compact Kakeya set. Since ${\mathcal A}_{{\mathbb K}_0}$ contains $K$, by (2), $\pdim {\mathcal A}_{{\mathbb K}_0} > 1$, hence the Kakeya set is non-sticky. Since (1) and (3) imply that $0<{\mathcal H}^1(K)<\infty$, (4) implies we can apply the Besicovitch projection theorem \ref{BPT} to conclude that for almost all $t\in[0,1]$, the Lebesgue measure of the projection $m^1(\pi_{t} (K)) = 0.$
   As the set 
    $$
    {\mathbb K}_0\cap \{(x,y)\in \R^2: y=t\} = \{a + vt: (v,a)\in K\}\times  \{t\} =  \pi_t(K)\times \{t\},
    $$
    almost all slices have Lebesgue measure zero. By Fubini's theorem, $m^2({\mathbb K}_0) = 0$. Therefore, the Kakeya set also has measure zero. 
\end{proof}

\subsection{Packing dimension of fractal cubes} Because of Proposition \ref{prop-K}, our goal is to construct a Cantor set satisfying the specified conditions. We will build our set using fractal squares, and in this subsection, we obtain a lower bound for their packing dimension. Since we could not find a direct reference in the literature about this topic, we make a careful treatment here for the sake of completeness.

Let $(N_k)_{k = 1}^{\infty}$ and $(M_k)_{k=1}^{\infty}$ be two sequences of positive integers with $N_k\ge 2$ and $1<M_k<N_k^d$. The {\bf fractal cubes} associated with $(N_k)_{k = 1}^{\infty}$ and $(M_k)_{k=1}^{\infty}$ are the fractal constructed via the following inductive procedure.

Let $C_0 = [0,1]^d$. Divide $C_0$ into $N_1^d$ many cubes of side length $N_1^{-1}$, and choose $M_1$ many such cubes. Let $C_1$ be the collection of all these cubes.  Suppose $C_k$ has been chosen for some $k\ge 1$. For each cube $Q$ in $C_k$, divide $Q$ into $N_{k+1}^d$ many cubes of side length $(N_1\cdots N_{k}N_{k+1})^{-1}$, and choose $M_{k+1}$ many distinct cubes. Let $C_{k+1}$ be the collection of all the cubes chosen from all the $Q$ in $C_k$. In this construction, $C_k$ forms a decreasing sequence of compact sets.  The fractal cubes associated with $(N_k)_{k = 1}^{\infty}$ and $(M_k)_{k=1}^{\infty}$ are the set 
$$
C = \bigcap_{n=1}^{\infty} C_k.
$$
%$C$ is called a {\bf Cantor-Moran fractal cube} if for all $k\ge 1$,  for all  $Q_1, Q_2\in C_k$, the $M_{k+1}$ subsquares in $Q$ and $Q'$ have the same pattern in the sense that there exists a translation $t\in\R^d$ such that 
%$$
%\bigcup_{Q'\subset Q_1} Q' = \bigcup_{Q'\subset Q_2} Q' +t
%$$

Next we provide a lower bound on the packing dimension for the fractal cubes. Here, we would like to thank Ying Xiong for helping us navigate the correct literature. 

\begin{proposition}\label{prop-fractal-cube}
    For the fractal cubes constructed above, 
    $$
    \pdim (C)\ge\limsup_{k\to\infty} \frac{\log (M_1\cdots M_k)}{\log (N_1\cdots N_k)}.
    $$
\end{proposition}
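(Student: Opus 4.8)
The plan is to put the natural uniform mass distribution $\mu$ on $C$ and to bound $\pdim C$ from below by controlling the upper local dimension of $\mu$, invoking the measure-theoretic lower bound for packing dimension (the packing analogue of the mass distribution principle). Write $m_k = M_1\cdots M_k$ for the number of cubes surviving at level $k$ and $L_k = N_1\cdots N_k$ for the reciprocal of their common side length; note $L_k \ge 2^k \to \infty$ since each $N_k \ge 2$. First I would define a Borel probability measure $\mu$ by assigning to each of the $m_k$ cubes comprising $C_k$ the equal mass $1/m_k$. Because every cube of $C_k$ contains exactly $M_{k+1}$ cubes of $C_{k+1}$, these assignments are consistent across levels, so $\mu$ extends to a Borel probability measure supported on $C = \bigcap_k C_k$.

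The key estimate bounds the $\mu$-mass of small balls. By construction each cube of $C_k$ is an axis-parallel cube of side $L_k^{-1}$ lying in the uniform grid of mesh $L_k^{-1}$ on $[0,1]^d$. Hence for any $x \in C$ the ball $B(x, L_k^{-1})$ sits inside an axis-parallel cube of side $2L_k^{-1}$, which meets at most $3^d$ cells of that grid and therefore at most $3^d$ cubes of $C_k$. Since each surviving cube carries mass $1/m_k$, this gives
\[
\mu(B(x, L_k^{-1})) \le \frac{3^d}{m_k} \qquad \text{for every } x \in C \text{ and every } k.
\]

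Next I would read off the upper local dimension. Restricting the limsup to the scales $r = L_k^{-1}$, taking logarithms, and dividing by $\log L_k^{-1} = -\log L_k \to -\infty$ (so that the constant $\log 3^d$ is absorbed), I obtain, for \emph{every} $x \in C$,
\[
\limsup_{r \to 0} \frac{\log \mu(B(x,r))}{\log r} \;\ge\; \limsup_{k \to \infty} \frac{\log m_k - \log 3^d}{\log L_k} \;=\; \limsup_{k \to \infty} \frac{\log(M_1\cdots M_k)}{\log(N_1\cdots N_k)} =: \alpha.
\]
Since $\mu(C) = 1 > 0$ and this upper local dimension is at least $\alpha$ at every point of $C$, the measure-theoretic lower bound for packing dimension yields $\pdim C \ge \alpha$, which is the assertion. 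Concretely, I would cite the statement: if a finite Borel measure $\mu$ satisfies $\limsup_{r\to 0}\frac{\log \mu(B(x,r))}{\log r} \ge \alpha$ for all $x$ in a Borel set $E$ with $\mu(E) > 0$, then $\pdim E \ge \alpha$.

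I expect Step 4 to be the main obstacle, namely having the precise measure-theoretic lower bound for packing dimension in hand. In contrast to the Hausdorff case --- where $\mu(B(x,r)) \lesssim r^\alpha$ for \emph{all} small $r$ forces $\hdim \ge \alpha$ --- for packing dimension it is the \emph{liminf} of the density ratio $\mu(B(x,r))/r^\alpha$ (equivalently, the limsup of $\log\mu(B(x,r))/\log r$) that controls the dimension from below, via the lower-density characterization of packing measure (see Mattila, \emph{Geometry of Sets and Measures}, Thm.~6.10, or Falconer, \emph{Techniques in Fractal Geometry}). Identifying and correctly applying this formulation is the delicate point; by comparison the consistency of $\mu$ and the $3^d$ grid-overlap count are routine.
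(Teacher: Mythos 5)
Your proof is correct, and it reaches the conclusion by a genuinely different bridge than the paper, even though the underlying construction and counting are identical. Both arguments use the same uniform mass distribution $\mu$ (mass $(M_1\cdots M_k)^{-1}$ per surviving cube at level $k$) and the same geometric estimate that a ball of radius $(N_1\cdots N_k)^{-1}$ centered in $C$ meets boundedly many ($3^d$, or the paper's $c_d$) level-$k$ cubes, hence has $\mu$-mass $\lesssim (M_1\cdots M_k)^{-1}$. Where you diverge is in how this ball estimate is converted into a packing-dimension bound: you observe that the estimate holds at \emph{every} point of $C$, deduce that the upper local dimension $\limsup_{r\to 0}\log\mu(B(x,r))/\log r$ is at least $\alpha$ everywhere on $C$, and invoke the standard pointwise criterion (Falconer, \emph{Techniques in Fractal Geometry}, Prop.~2.3(b); equivalently the Taylor--Tricot lower-density characterization of packing measure) to get $\pdim C\ge\alpha$. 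The paper instead integrates the same estimate to bound the entropy $H_{r_k}(\mu)$ from below, concluding that the upper entropy dimension $\overline{\edim}(\mu)\ge\alpha$, and then cites the Fan--Lau--Rao inequality $\pdim(\mu)\ge\overline{\edim}(\mu)$. Your route is arguably the more elementary and self-contained one here: because the ball estimate is uniform in $x$, nothing is lost by working pointwise, and the criterion you cite is textbook-standard. The paper's entropy route is the more robust tool in situations where one only controls ball masses on average (i.e., after integration against $\mu$) rather than uniformly, but that extra generality is not needed for this proposition. One small point of care in your write-up, which you handled correctly: the inequality $\mu(B(x,L_k^{-1}))\le 3^d/m_k$ must be divided by the \emph{negative} quantity $\log L_k^{-1}$, flipping the inequality, and the constant $\log 3^d$ is then harmless since $\log L_k\ge k\log 2\to\infty$.
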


\begin{proof}
    Let $\mu$ be the probability measure that assigns mass to each $Q$ in each $C_k$ by $\mu (Q) = (M_1\cdots M_k)^{-1}$, and extend $\mu$ to a Borel probability measure by Carath\'eodory's extension theorem.  Then $\mu$ is supported on $C$. Define 
    $$
    \pdim(\mu) = \inf\{\pdim (E): \mu (E^{c}) = 0, E \ \mbox{Borel}\}.
    $$
    Using this definition, $\pdim(C)\ge \pdim(\mu)$. By \cite{FLR02} (see also \cite[Theorem 2.6.5]{BKS2024}), $\pdim(\mu)\ge \overline{\edim}(\mu)$, where $\overline{\edim}(\mu)$ is the upper entropy dimension of $\mu$, which is defined as follows:
    $$
    \overline{\edim}(\mu) = \limsup_{r\to 0} \frac{H_r(\mu)}{-\log r}, 
 \hspace{15pt} H_r(\mu) = -\int \log \mu (B_r(x))d\mu (x). 
    $$
  Consider the sequence of scales $r_k = (N_1\cdots N_k)^{-1}$. There exists a constant $c_d$, depending on the ambient space $\R^d$, such that balls $B_{r_k}(x)$ intersect at most $c_d$  many cubes of side length $r_k$. If $x\in C$, then since $B_{r_k}(x)$ intersects at most $c_d$ many $Q$ in $C_k$, 
  $$
  \mu (B_{r_k}(x)) \le c_d \cdot \mu(Q) = c_d  \cdot (M_1\cdots M_k)^{-1}.  
  $$
Decomposing the integration over all $Q$ in $C_k$
  $$
  H_{r_k}(\mu)\ge -\sum_{Q\in C_k} \mu (Q)\cdot \log [c_d  \cdot (M_1\cdots M_k)^{-1}] = -\log c_d+\log (M_1\cdots M_k),
  $$
since $\mu (Q) = (M_1\cdots M_k)^{-1}$ and there are exactly $M_1\cdots M_k$ many $Q$ in $C_k$.   Consequently, 
  $$
  \overline{\edim}(\mu) ~\ge ~ \limsup_{k\to \infty} \frac{H_{r_k}(\mu)}{-\log r_k} ~\ge ~\limsup_{k\to \infty}\frac{\log (M_1\cdots M_k)}{\log (N_1\cdots N_k)}.
  $$
\end{proof}

Finally, we recall the dimension inequalities for packing and Hausdorff dimensions of products that we will need. For $E\subset\R^d $ and $F\subset \R^{d'}$,
\begin{equation}\label{hdimeq1}
\hdim(E)+\hdim(F) \le \hdim(E\times F) \le \hdim(E)+\pdim(F), 
\end{equation}
\begin{equation}\label{hdimeq2}
 \hdim(E)+\pdim(F)  \le \pdim(E\times F)\le \pdim(E)+\pdim(F).  
\end{equation}
These inequalities can be found in \cite[p.62]{BKS2024}, and the proof of (\ref{hdimeq2}) can be found in \cite{Tricot82}.

\section{Cantor set construction for non-sticky Kakeya sets}\label{section3}

In this section, we describe a Cantor set construction that will lead to the existence of $K$ in Proposition \ref{prop-K}. Fix a sequence of positive integers $\{n_k\}_{k=1}^{\infty}$ with the property that 
$$
n_{2k}\ge n_{2k-1} \hspace{10pt} \text{for all } k\ge 1.
$$

Let $C_0 = [0,1]^2$. We subdivide $C_0$ into $4^{2n_1} = 2^{4n_1}$ squares, each of side length $4^{-n_1}$, and choose $2^{3n_1}$ squares satisfying the following condition:

\smallskip

\begin{enumerate}
    \item [${\bf (*)}_1$] In each column  $[j4^{-n_1}, (j+1)4^{-n_1}]\times [0,1]$, for $j = 0,1,\cdots, 4^{n_1}-1$, exactly $2^{n_1}$ squares are chosen.
\end{enumerate}

% As these squares are just translates of $[0,4^{-n_1}]$ by a vector of the form $(j,k)4^{-n_1}$. We denote by ${\mathcal D}_1\subset \{0,1\cdots, 4^{n_1}-1\}\times\{0,1\cdots, 4^{n_1}-1\}$ so that 
 %   $$
  %  C_1 = \bigcup_{d\in{\mathcal D}_1} \frac{1}{4^{n_1}}(d_1+[0,1]^2) = \frac{1}{4^{n_1}}{\mathcal D}+\left[0,\frac1{4^{n_1}}\right]^2
   % $$
After $C_1$ is chosen, we subdivide each square in $C_1$ into $2^{4n_2}$ subsquares, each of side length $4^{-(n_1+n_2)}$, and choose  $2^{n_2}$ subsquares satisfying the following alternative pair of conditions:
\smallskip

\begin{enumerate}
    \item [${\bf (* ~*)}_1$] In each column $[j4^{-(n_1+n_2)}, (j+1)4^{-(n_1+n_2)}]\times [0,1]$, for $j = 0,1,\cdots, 4^{n_1+n_2}-1$, at least one square is chosen.

    \item [${\bf (***)}_1$] For both the $45^{\circ}$ and $-45^{\circ}$ orthogonal projections, there exist at least two squares whose projections overlap exactly.
\end{enumerate}
\smallskip

\noindent Condition ${\bf (***)}_1$ can easily be achieved by choosing two pairs of squares that are aligned diagonally. For ${\bf (* ~*)}_1$, we have the following lemma:

\begin{lem}\label{lemma1}
If $n_1\ge n_2$, then we can choose squares satisfying ${\bf (*~*)}_1$.
\end{lem}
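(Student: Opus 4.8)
The plan is to verify $(**)_1$ one coarse column at a time, reducing the statement to an elementary covering (pigeonhole) count. Fix a coarse column $[j4^{-n_1},(j+1)4^{-n_1}]\times[0,1]$. By condition $(*)_1$, this column contains exactly $2^{n_1}$ of the squares of $C_1$. When each such square is subdivided into its $4^{n_2}\times 4^{n_2}$ grid of subsquares of side $4^{-(n_1+n_2)}$, every resulting subsquare projects horizontally onto one of the $4^{n_2}$ fine columns $[j4^{-n_1}+i\,4^{-(n_1+n_2)},\,j4^{-n_1}+(i+1)4^{-(n_1+n_2)}]\times[0,1]$ for $i=0,\dots,4^{n_2}-1$; crucially, this collection of fine columns is the same for all $2^{n_1}$ squares of $C_1$ in the coarse column, since they share the same horizontal extent. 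Thus satisfying $(**)_1$ amounts to covering these $4^{n_2}$ fine columns using the $2^{n_1}$ squares of $C_1$, each of which is allowed to select $2^{n_2}$ subsquares.

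The key count is that the available selections suffice. Each square of $C_1$ can place its $2^{n_2}$ chosen subsquares in $2^{n_2}$ distinct fine columns (pick one subsquare per column, which is possible as $2^{n_2}\le 4^{n_2}$), so the $2^{n_1}$ squares together can meet up to $2^{n_1}\cdot 2^{n_2}=2^{n_1+n_2}$ fine columns. The hypothesis $n_1\ge n_2$ gives precisely $2^{n_1+n_2}\ge 2^{2n_2}=4^{n_2}$, so there is exactly enough capacity to hit every fine column.

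To make the choice explicit, I would index the fine columns of the coarse column by $i=0,\dots,4^{n_2}-1$ and assign fine column $i$ to the square of $C_1$ numbered $\lfloor i/2^{n_2}\rfloor$. Since $i<4^{n_2}$ we have $\lfloor i/2^{n_2}\rfloor<2^{n_2}\le 2^{n_1}$, so the assignment uses only genuinely available squares of $C_1$ and gives each of the first $2^{n_2}$ of them exactly $2^{n_2}$ fine columns. In each such square I choose one subsquare in each of its $2^{n_2}$ assigned fine columns, selecting exactly $2^{n_2}$ distinct subsquares, as required; any remaining squares of $C_1$ (present only when $n_1>n_2$) may select their $2^{n_2}$ subsquares arbitrarily. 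Carrying this out in every coarse column covers all $4^{n_1+n_2}$ fine columns of $[0,1]^2$, which is precisely $(**)_1$.

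I do not expect a genuine obstacle: the content is entirely the equivalence $2^{n_1+n_2}\ge 4^{n_2}\iff n_1\ge n_2$ together with the observation that the fine-column structure is shared across the $C_1$-squares lying in a common coarse column. The only point requiring care is the bookkeeping of the three scales (the $4^{-n_1}$ coarse columns, the $4^{-(n_1+n_2)}$ fine columns, and the $4^{n_2}\times 4^{n_2}$ internal grids) and confirming that the per-square cap of $2^{n_2}$ selections is never exceeded by the assignment.
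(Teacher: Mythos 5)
Your proof is correct and follows essentially the same route as the paper's: fix a coarse column, note it contains $2^{n_1}$ squares of $C_1$ each contributing $2^{n_2}$ subsquare choices, and use $2^{n_1+n_2}\ge 4^{n_2}\iff n_1\ge n_2$ to cover every fine subcolumn. Your explicit assignment of fine columns to $C_1$-squares (via $\lfloor i/2^{n_2}\rfloor$) just spells out the distribution step that the paper leaves as an implicit pigeonhole count, which is a welcome bit of added care but not a different argument.
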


\begin{proof}
    Fix a column at the $(4^{-n_1})$-scale and denote it by $R_k = [k4^{-n_1}, (k+1)4^{-n_1}]\times [0,1]$. Using $({\bf *})_1$, there are exactly $2^{n_1}$ squares of side length $4^{-n_1}$, denoted by $Q_1,\cdots, Q_{2^{n_1}}$. By construction, we will choose $2^{n_2}$ subsquares from each square $Q_k$, where $k = 1,\cdots, 2^{n_1}$. Hence, there will be exactly $2^{n_1+n_2}$ subsquares of side length $4^{-(n_1+n_2)}$ in $R_k$.

     We now subdivide $R_k$ into $4^{n_2}$ subcolumns of width $4^{-(n_1+n_2)}$. Since $n_1\ge n_2$,   we have $2^{n_1+n_2}\ge 4^{n_2}$, so that the number of subsquares in $R_k$ is greater than the number of subcolumns. Thus, we are guaranteed to be able to choose at least one square in each column $[j4^{-(n_1+n_2)}, (j+1)4^{-(n_1+n_2)}]\times [0,1]$ in $R_k$. Since this property holds for all $k$, ${\bf (*~*)}_1$ holds for all columns of width $4^{-(n_1+n_2)}$.
\end{proof}

Finally, let $C_2$ be the collection of all chosen squares of side length $4^{-(n_1+n_2)}$. See Figure \ref{figure1} for an illustration with $n_1 = n_2 = 1$.

\begin{figure}[h]
\centering
%\label{fig:enter-label}

\tikzset{every picture/.style={line width=0.75pt}} %set default line width to 0.75pt        

\begin{tikzpicture}[x=0.75pt,y=0.75pt,yscale=-1,xscale=1]
%uncomment if require: \path (0,300); %set diagram left start at 0, and has height of 300

%Straight Lines [id:da5070569815609438] 
\draw    (10,10) -- (10,170.5) ;
%Straight Lines [id:da9507654497186203] 
\draw    (170,10) -- (170,170.5) ;
%Straight Lines [id:da6536754151160848] 
\draw    (10,10) -- (170,10) ;
%Straight Lines [id:da16726139336122936] 
\draw    (10,170) -- (170,170) ;
%Straight Lines [id:da888539563742964] 
\draw    (50,10) -- (50,170.5) ;
%Straight Lines [id:da1392571202404168] 
\draw    (90,10) -- (90,170.5) ;
%Straight Lines [id:da6201160606471068] 
\draw    (130,10) -- (130,170.5) ;
%Straight Lines [id:da41116422625645366] 
\draw    (10,50) -- (170,50) ;
%Straight Lines [id:da7805615069835712] 
\draw    (10,90) -- (170,90) ;
%Straight Lines [id:da5766407403597065] 
\draw    (10,130) -- (170,130) ;
%Shape: Rectangle [id:dp1583351497938671] 
\draw  [fill={rgb, 255:red, 0; green, 0; blue, 0 }  ,fill opacity=1 ] (10,10) -- (50,10) -- (50,50) -- (10,50) -- cycle ;
%Straight Lines [id:da46068999193666194] 
\draw    (210,10) -- (210,170.5) ;
%Straight Lines [id:da5424292669792431] 
\draw    (370,10) -- (370,170.5) ;
%Straight Lines [id:da8109446990584859] 
\draw    (210,10) -- (370,10) ;
%Straight Lines [id:da37802900630065217] 
\draw    (210,170) -- (370,170) ;
%Straight Lines [id:da7103104714955155] 
\draw    (250,10) -- (250,170.5) ;
%Straight Lines [id:da6375819994874675] 
\draw    (290,10) -- (290,170.5) ;
%Straight Lines [id:da8831100412590901] 
\draw    (330,10) -- (330,170.5) ;
%Straight Lines [id:da370270982329839] 
\draw    (210,50) -- (370,50) ;
%Straight Lines [id:da46758720810429266] 
\draw    (210,90) -- (370,90) ;
%Straight Lines [id:da7562765967497629] 
\draw    (210,130) -- (370,130) ;
%Shape: Rectangle [id:dp41023777387011406] 
\draw  [fill={rgb, 255:red, 209; green, 209; blue, 209 }  ,fill opacity=1 ] (210,10) -- (250,10) -- (250,50) -- (210,50) -- cycle ;
%Shape: Rectangle [id:dp6850555116532449] 
\draw  [fill={rgb, 255:red, 0; green, 0; blue, 0 }  ,fill opacity=1 ] (90,10) -- (130,10) -- (130,50) -- (90,50) -- cycle ;
%Shape: Rectangle [id:dp006078730197637605] 
\draw  [fill={rgb, 255:red, 0; green, 0; blue, 0 }  ,fill opacity=1 ] (130,50) -- (170,50) -- (170,90) -- (130,90) -- cycle ;
%Shape: Rectangle [id:dp6677691074849915] 
\draw  [fill={rgb, 255:red, 0; green, 0; blue, 0 }  ,fill opacity=1 ] (50,50) -- (90,50) -- (90,90) -- (50,90) -- cycle ;
%Shape: Rectangle [id:dp6528712253362249] 
\draw  [fill={rgb, 255:red, 0; green, 0; blue, 0 }  ,fill opacity=1 ] (10,90) -- (50,90) -- (50,130) -- (10,130) -- cycle ;
%Shape: Rectangle [id:dp9624620802349225] 
\draw  [fill={rgb, 255:red, 0; green, 0; blue, 0 }  ,fill opacity=1 ] (130,90) -- (170,90) -- (170,130) -- (130,130) -- cycle ;
%Shape: Rectangle [id:dp6623720702725047] 
\draw  [fill={rgb, 255:red, 0; green, 0; blue, 0 }  ,fill opacity=1 ] (90,130) -- (130,130) -- (130,170) -- (90,170) -- cycle ;
%Shape: Rectangle [id:dp5569793799389348] 
\draw  [fill={rgb, 255:red, 0; green, 0; blue, 0 }  ,fill opacity=1 ] (50,130) -- (90,130) -- (90,170) -- (50,170) -- cycle ;
%Shape: Rectangle [id:dp04623887964049722] 
\draw  [fill={rgb, 255:red, 209; green, 209; blue, 209 }  ,fill opacity=1 ] (290,10) -- (330,10) -- (330,50) -- (290,50) -- cycle ;
%Shape: Rectangle [id:dp18463395112492897] 
\draw  [fill={rgb, 255:red, 209; green, 209; blue, 209 }  ,fill opacity=1 ] (250,50) -- (290,50) -- (290,90) -- (250,90) -- cycle ;
%Shape: Rectangle [id:dp18360427395456347] 
\draw  [fill={rgb, 255:red, 209; green, 209; blue, 209 }  ,fill opacity=1 ] (330,50) -- (370,50) -- (370,90) -- (330,90) -- cycle ;
%Shape: Rectangle [id:dp8758916588650607] 
\draw  [fill={rgb, 255:red, 209; green, 209; blue, 209 }  ,fill opacity=1 ] (330,90) -- (370,90) -- (370,130) -- (330,130) -- cycle ;
%Shape: Rectangle [id:dp9482843386733735] 
\draw  [fill={rgb, 255:red, 209; green, 209; blue, 209 }  ,fill opacity=1 ] (210,90) -- (250,90) -- (250,130) -- (210,130) -- cycle ;
%Shape: Rectangle [id:dp3288622544315374] 
\draw  [fill={rgb, 255:red, 209; green, 209; blue, 209 }  ,fill opacity=1 ] (250,130) -- (290,130) -- (290,170) -- (250,170) -- cycle ;
%Shape: Rectangle [id:dp7216158040006254] 
\draw  [fill={rgb, 255:red, 209; green, 209; blue, 209 }  ,fill opacity=1 ] (290,130) -- (330,130) -- (330,170) -- (290,170) -- cycle ;
%Straight Lines [id:da7556497523192215] 
\draw    (220,10) -- (220,170.5) ;
%Straight Lines [id:da8255617266335394] 
\draw    (230,10) -- (230,170.5) ;
%Straight Lines [id:da764390428867011] 
\draw    (240,10) -- (240,170.5) ;
%Straight Lines [id:da5073781161368217] 
\draw    (260,10) -- (260,170.5) ;
%Straight Lines [id:da9952797238829657] 
\draw    (270,10) -- (270,170.5) ;
%Straight Lines [id:da14797310908589179] 
\draw    (280,10) -- (280,170.5) ;
%Straight Lines [id:da2778332374359581] 
\draw    (320,10) -- (320,170.5) ;
%Straight Lines [id:da5761581369147454] 
\draw    (300,10) -- (300,170.5) ;
%Straight Lines [id:da8996065892171833] 
\draw    (310,10) -- (310,170.5) ;
%Straight Lines [id:da2689430867753244] 
\draw    (340,10) -- (340,170.5) ;
%Straight Lines [id:da28650914043527465] 
\draw    (360,10) -- (360,170.5) ;
%Straight Lines [id:da5787228643987901] 
\draw    (350,10) -- (350,170.5) ;
%Straight Lines [id:da28354360025546077] 
\draw    (210,20) -- (370,20) ;
%Straight Lines [id:da8431550359523311] 
\draw    (210,30) -- (370,30) ;
%Straight Lines [id:da43544985644469114] 
\draw    (210,40) -- (370,40) ;
%Straight Lines [id:da5136454507488776] 
\draw    (210,60) -- (370,60) ;
%Straight Lines [id:da2284435346781427] 
\draw    (210,70) -- (370,70) ;
%Straight Lines [id:da5246201889525005] 
\draw    (210,80) -- (370,80) ;
%Straight Lines [id:da9495485038660589] 
\draw    (210,100) -- (370,100) ;
%Straight Lines [id:da532577980817097] 
\draw    (210,110) -- (370,110) ;
%Straight Lines [id:da6937144154428927] 
\draw    (210,120) -- (370,120) ;
%Straight Lines [id:da45304140721341724] 
\draw    (210,140) -- (370,140) ;
%Straight Lines [id:da4302998697037973] 
\draw    (210,150) -- (370,150) ;
%Straight Lines [id:da6341035611819688] 
\draw    (210,160) -- (370,160) ;
%Shape: Rectangle [id:dp31600291028192073] 
\draw  [fill={rgb, 255:red, 0; green, 0; blue, 0 }  ,fill opacity=1 ] (210.5,21) -- (219.5,21) -- (219.5,29.5) -- (210.5,29.5) -- cycle ;
%Shape: Rectangle [id:dp3645126524544541] 
\draw  [fill={rgb, 255:red, 0; green, 0; blue, 0 }  ,fill opacity=1 ] (230.5,31) -- (239.5,31) -- (239.5,39.5) -- (230.5,39.5) -- cycle ;
%Shape: Rectangle [id:dp7821959285694614] 
\draw  [fill={rgb, 255:red, 0; green, 0; blue, 0 }  ,fill opacity=1 ] (240.5,101) -- (249.5,101) -- (249.5,109.5) -- (240.5,109.5) -- cycle ;
%Shape: Rectangle [id:dp9707255483092667] 
\draw  [fill={rgb, 255:red, 0; green, 0; blue, 0 }  ,fill opacity=1 ] (220.5,111) -- (229.5,111) -- (229.5,119.5) -- (220.5,119.5) -- cycle ;
%Shape: Rectangle [id:dp407692613285067] 
\draw  [fill={rgb, 255:red, 0; green, 0; blue, 0 }  ,fill opacity=1 ] (280.5,141) -- (289.5,141) -- (289.5,149.5) -- (280.5,149.5) -- cycle ;
%Shape: Rectangle [id:dp9752035608777568] 
\draw  [fill={rgb, 255:red, 0; green, 0; blue, 0 }  ,fill opacity=1 ] (250.5,131) -- (259.5,131) -- (259.5,139.5) -- (250.5,139.5) -- cycle ;
%Shape: Rectangle [id:dp9859732316534584] 
\draw  [fill={rgb, 255:red, 0; green, 0; blue, 0 }  ,fill opacity=1 ] (290.5,151) -- (299.5,151) -- (299.5,159.5) -- (290.5,159.5) -- cycle ;
%Shape: Rectangle [id:dp8660357746837294] 
\draw  [fill={rgb, 255:red, 0; green, 0; blue, 0 }  ,fill opacity=1 ] (310.5,161) -- (319.5,161) -- (319.5,169.5) -- (310.5,169.5) -- cycle ;
%Shape: Rectangle [id:dp8115221397735349] 
\draw  [fill={rgb, 255:red, 0; green, 0; blue, 0 }  ,fill opacity=1 ] (330.5,61) -- (339.5,61) -- (339.5,69.5) -- (330.5,69.5) -- cycle ;
%Shape: Rectangle [id:dp0937615676542819] 
\draw  [fill={rgb, 255:red, 0; green, 0; blue, 0 }  ,fill opacity=1 ] (360.5,81) -- (369.5,81) -- (369.5,89.5) -- (360.5,89.5) -- cycle ;
%Shape: Rectangle [id:dp21581260524157786] 
\draw  [fill={rgb, 255:red, 0; green, 0; blue, 0 }  ,fill opacity=1 ] (350.5,101) -- (359.5,101) -- (359.5,109.5) -- (350.5,109.5) -- cycle ;
%Shape: Rectangle [id:dp721683801651872] 
\draw  [fill={rgb, 255:red, 0; green, 0; blue, 0 }  ,fill opacity=1 ] (340.5,121) -- (349.5,121) -- (349.5,129.5) -- (340.5,129.5) -- cycle ;
%Shape: Rectangle [id:dp6731957351966867] 
\draw  [fill={rgb, 255:red, 0; green, 0; blue, 0 }  ,fill opacity=1 ] (270.5,61) -- (279.5,61) -- (279.5,69.5) -- (270.5,69.5) -- cycle ;
%Shape: Rectangle [id:dp49012988573232086] 
\draw  [fill={rgb, 255:red, 0; green, 0; blue, 0 }  ,fill opacity=1 ] (260.5,71) -- (269.5,71) -- (269.5,79.5) -- (260.5,79.5) -- cycle ;
%Shape: Rectangle [id:dp6495005333108439] 
\draw  [fill={rgb, 255:red, 0; green, 0; blue, 0 }  ,fill opacity=1 ] (300.5,21) -- (309.5,21) -- (309.5,29.5) -- (300.5,29.5) -- cycle ;
%Shape: Rectangle [id:dp9474856597154186] 
\draw  [fill={rgb, 255:red, 0; green, 0; blue, 0 }  ,fill opacity=1 ] (320.5,41) -- (329.5,41) -- (329.5,49.5) -- (320.5,49.5) -- cycle ;
\end{tikzpicture}
\caption{Examples of first (left) and second (right) iterations of the construction with $n_1=n_2 = 1$ and with ${\bf (***)}_1$ also satisfied.}\label{figure1}
\end{figure}
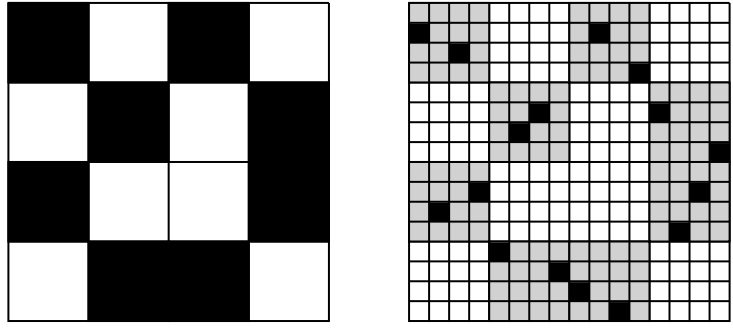

    \smallskip

\noindent{\bf Inductive step.} In the inductive step, suppose that $C_{2k-1}$ and $C_{2k}$ have been chosen.  For each square $Q_k$ in $C_{2k}$, we subdivide $Q_k$ into $2^{4n_{2k+1}}$ many squares and choose $2^{3n_{k+1}}$ of them. We will require 

\smallskip

\begin{enumerate}
    \item [${\bf (*)}_k$] For all $Q_k$ in $C_{2k}$, and for each column of width $4^{-(n_1+\cdots+n_{2k+1})}$ of the subdivision of $Q_k$,  exactly $2^{n_{k+1}}$ squares from each column are chosen.
\end{enumerate}
\smallskip

\noindent Denote the collection of all of these squares obtained from all $Q_k$ by $C_{2k+1}$.

We now further subdivide each square in $C_{2k+1}$ into $2^{4n_{2k+2}}$ many subsquares and choose $2^{n_{2k+2}}$ many squares satisfying the following conditions:
\smallskip

\begin{enumerate}
    \item [${\bf (*~*)}_k$] In each column of width $4^{-(n_1+\cdots+n_{2k+2})}$ in each $Q_k$ from $C_{2k}$, at least one square is chosen.
    \item [${\bf (***)}_k$] For both the $45^{\circ}$ and $-45^{\circ}$ orthogonal projections, there exist at least two squares whose projections overlap exactly.
\end{enumerate}
\smallskip

By a similar argument as Lemma \ref{lemma1}, provided that $n_{2k+1}\ge n_{2k+2}$, ${\bf (*~*)}_k$ can always be satisfied. We let $C_{2k+2}$ be the collection of chosen squares at scale $4^{-(n_1+\cdots+n_{2k+2})}$.   This completes the inductive step. Our Cantor set is
$$
C = \bigcap_{k=1}^{\infty} C_{k},
$$
and we will say that $C$ is a Cantor set generated by the sequence $\{n_k\}_{k=1}^{\infty}$. We notice that the construction of squares in $C_{2k+2}$ may be completely independent in each $Q_k$ from $C_{2k}$, so we will also consider the following additional condition:

\smallskip

$({\mathsf M}):$ The patterns of squares at scale $4^{-(n_1+\cdots +n_{2k+2})}$ in $Q_k$ are the same for all  $Q_k$ in $C_{2k}$.

\smallskip

\noindent This will make $C$ a {\it Moran fractal cube}, which is a commonly used name in the literature.

First, $C$ satisfies condition (1) of Proposition \ref{prop-K}:

\begin{proposition}\label{prop_proj}
    $\pi_0(C) = [0,1]$.  
\end{proposition}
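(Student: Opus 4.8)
The plan is to prove the two inclusions separately. The inclusion $\pi_0(C)\subseteq[0,1]$ is immediate from $C\subset[0,1]^2$, so all the content is in $\pi_0(C)\supseteq[0,1]$. I would establish this in two moves: first show that every \emph{finite}-stage set already projects onto the whole interval, and then pass to the limit set $C$ by a compactness argument, since projection does not commute with the infinite intersection defining $C$.

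\textbf{Step 1: each even stage projects onto $[0,1]$.} I would prove by induction that $\pi_0(C_{2k})=[0,1]$ for every $k\ge 1$. The basic mechanism is that an axis-aligned chosen square has the \emph{same} side length as the column containing it, so its $x$-projection is exactly that column's $x$-interval; consequently, if every column at a given scale contains at least one chosen square, the union of the squares' $x$-projections is exactly the union of the columns, i.e.\ the parent's projection. For the base case, ${\bf (*)}_1$ puts $2^{n_1}\ge 1$ squares in each width-$4^{-n_1}$ column, giving $\pi_0(C_1)=[0,1]$, and then ${\bf (*~*)}_1$ puts at least one square in each width-$4^{-(n_1+n_2)}$ column inside every $Q\in C_1$; since these columns tile $\pi_0(Q)$, taking the union over $Q$ yields $\pi_0(C_2)=\pi_0(C_1)=[0,1]$. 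For the inductive step, assuming $\pi_0(C_{2k})=[0,1]$, condition ${\bf (*~*)}_k$ guarantees that each width-$4^{-(n_1+\cdots+n_{2k+2})}$ column inside each $Q_k\in C_{2k}$ contains a chosen square; these columns tile $\pi_0(Q_k)$, so $\pi_0(C_{2k+2})=\bigcup_{Q_k\in C_{2k}}\pi_0(Q_k)=\pi_0(C_{2k})=[0,1]$.

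\textbf{Step 2: pass to the limit.} Fix $x_0\in[0,1]$ and consider the vertical fibers $V_k:=(\{x_0\}\times[0,1])\cap C_{2k}$. Each $V_k$ is compact, the $V_k$ are nested decreasing (because the $C_{2k}$ are), and each $V_k$ is nonempty precisely because $x_0\in\pi_0(C_{2k})=[0,1]$ by Step 1. By the finite-intersection property for nested nonempty compacta, $\bigcap_k V_k\neq\emptyset$; and since $\bigcap_k C_{2k}=\bigcap_m C_m=C$, we have $\bigcap_k V_k=(\{x_0\}\times[0,1])\cap C$. Thus this fiber of $C$ is nonempty, giving $x_0\in\pi_0(C)$. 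As $x_0$ was arbitrary, $\pi_0(C)=[0,1]$. (Equivalently, one can build an explicit nested chain $Q^{(1)}\supset Q^{(2)}\supset\cdots$ of chosen squares with $Q^{(m)}\in C_m$ and $x_0\in\pi_0(Q^{(m)})$—at each step the column containing $x_0$ holds a chosen child—whose intersection is a single point of $C$ whose first coordinate is forced to equal $x_0$ since the side lengths tend to $0$.)

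\textbf{Main obstacle.} The genuinely substantive point is \emph{not} Step 1, which is a direct consequence of the covering conditions ${\bf (*)}$ and ${\bf (*~*)}$, but the observation that $\pi_0\bigl(\bigcap_m C_m\bigr)$ need not equal $\bigcap_m \pi_0(C_m)$; the compactness input in Step 2 is exactly what closes this gap. The only minor care needed is that an $x_0$ lying on the shared boundary of two columns belongs to both, but since the chosen squares are closed and fill the full $x$-width of their columns, such an $x_0$ is still covered, so no boundary point is lost.
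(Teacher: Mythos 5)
Your proof is correct and takes essentially the same approach as the paper's: the column conditions $({\bf *})_1$, $({\bf *~*})_1$, and $({\bf *~*})_k$ guarantee that every vertical line $x=x_0$ meets each stage of the construction, and a nested-compactness argument transfers this to $C$ — indeed, your parenthetical chain $Q^{(1)}\supset Q^{(2)}\supset\cdots$ of chosen squares is precisely the paper's formulation. The only difference is organizational: you run the limit step through the fibers $V_k$ of the whole stage sets rather than through a single nested chain of squares, which makes explicit (and correctly handles) the point that projection does not commute with the infinite intersection, but the substance is identical.
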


\begin{proof}
    For all $x_0\in[0,1]$, the line $x=x_0$ always passes through some square $Q_2$ from $C_2$, by $({\bf *})_1$ and $({\bf *~*})_1$. By induction with $({\bf *})_k$ and $({\bf *~*})_{k}$, we can find a nested sequence of cubes $Q_2\supset Q_4\supset\cdots$, where $Q_{2k}$ is drawn from $C_{2k}$, such that the line $x=x_0$ intersects all $Q_{2k}$. Hence, the line $x=x_0$ intersects $C$.
\end{proof}

\subsection{Hausdorff and packing dimension.} The following is our main result about the dimensions of the Cantor set $C$, which when combined with Proposition \ref{prop_proj}, gives a compact set satisfying all the conditions of Proposition \ref{prop-K}.

\begin{proposition}\label{prop3.3}
    There exists a sequence $\{n_k\}_{k=1}^{\infty}$ with $n_{2k-1}\ge n_{2k}$ for all $k$ such that the Cantor set $C$ it generates satisfies
    \begin{enumerate}
    \item $0<{\mathcal H}^1(C)<\infty$, i.e., $\hdim C = 1$;
    \item $\pdim(C) >1$; 
    \item $m^{1} (\pi_{\pm} (C)) = 0$,
    \end{enumerate}
    where $\pi_{\pm}$ are, respectively, the orthogonal projections onto $y=\pm x$. 
\end{proposition}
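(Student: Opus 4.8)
The plan is to fix the generating sequence to consist of equal pairs $n_{2k-1}=n_{2k}=a_k$ with $(a_k)$ growing super-exponentially (e.g. $a_k=2^{2^k}$, so that $a_{k+1}/\sum_{j\le k}a_j\to\infty$), to impose the Moran condition $({\mathsf M})$, and to realize the squares at each step by a carefully chosen pattern, replicated in every cube, of the kind described below. All three items are then read off from the box-counting ratio $R_m=\dfrac{\log_2\#C_m}{2S_m}$, where $\#C_m$ is the number of squares of $C_m$, $S_m=n_1+\dots+n_m$, and the side length is $s_m=4^{-S_m}$. An expansion step multiplies $\#C_m$ by $2^{3n_{2k+1}}$ and the inverse scale by $4^{n_{2k+1}}$, contributing local slope $3/2$, while a contraction step contributes local slope $1/2$; the alternation is what forces $\hdim$ and $\pdim$ apart.

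For item (1) I would argue the two inequalities separately. For the upper bound I use only the even levels: equal pairs give $O_k:=n_1+n_3+\dots+n_{2k-1}=n_2+n_4+\dots+n_{2k}=:E_k$, hence $R_{2k}=1$, i.e. $\#C_{2k}\,s_{2k}=1$; covering $C$ by the $\#C_{2k}$ squares of diameter $\sqrt2\,s_{2k}$ then gives ${\mathcal H}^1(C)\le\sqrt2<\infty$. For the lower bound no mass-distribution estimate is required, because Proposition~\ref{prop_proj} already furnishes $\pi_0(C)=[0,1]$ and $\pi_0$ is $1$-Lipschitz: ${\mathcal H}^1(C)\ge{\mathcal H}^1(\pi_0(C))={\mathcal H}^1([0,1])=1>0$. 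Hence $0<{\mathcal H}^1(C)<\infty$ and $\hdim C=1$.

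Item (2) is immediate from the machinery already in place. The construction is a fractal cube with uniform per-level branching, so Proposition~\ref{prop-fractal-cube} gives $\pdim(C)\ge\limsup_m R_m$. At odd levels $R_{2k+1}=\dfrac{4E_k+3a_{k+1}}{2(2E_k+a_{k+1})}\to\tfrac32$ as $a_{k+1}/E_k\to\infty$, so $\pdim(C)\ge\tfrac32>1$. This is the only place the super-exponential growth enters, and it is consistent with item (1) precisely because $R_m$ drops back to $1$ at every even level.

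The heart of the matter, and the step I expect to be the main obstacle, is item (3). Writing each $\pm45^\circ$ projection in the coordinate $u=x\pm y$, a square of $C_m$ of side $s_m$ projects to an interval of length $2s_m$ pinned to an integer anti-diagonal index; if $D_m^{\pm}$ counts the occupied indices then $m^1(\pi_{\pm}(C_m))\le 2D_m^{\pm}s_m$, so it suffices to make $D_{2k}^{\pm}s_{2k}\to0$. The mechanism is that each full pair of steps selects only $2^{4a_k}$ of the $2^{8a_k}$ sub-squares, so by the elementary bound (number of distinct sums $x+y$)$\,\times\,$(number of distinct differences $x-y$)$\,\ge\,$(number of chosen squares) the selection \emph{must} occupy at least $\sim2^{2a_k}$ indices in each direction; the plan is to design the replicated pattern so that it occupies no more than $\sim2^{2a_k}$ in each direction, giving $D^{\pm}_{2k}\le\prod_{j\le k}2^{2a_j}=2^{2E_k}$ and hence $D^{\pm}_{2k}s_{2k}\le 2^{2E_k}2^{-4E_k}=2^{-2E_k}\to0$. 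Two points make this delicate. First, the per-pair deficit must survive the union over all squares sharing an index, which is exactly what $({\mathsf M})$ secures — squares on a common anti-diagonal carry identical patterns and hence project identically, with ${\bf (***)}_k$ guaranteeing the required coincidences — whereas without self-similarity distinct squares would fill complementary indices and the deficit would be lost. Second, and this is the real combinatorial obstruction, the chosen pattern must simultaneously be \emph{thin in both} diagonal directions, \emph{meet every vertical column} (so that Proposition~\ref{prop_proj}, and thus the lower bound in item (1), still holds), and respect the column counts ${\bf (*)}_k$ and ${\bf (*~*)}_k$ of the two-scale construction; reconciling these near the extremal case of the above product inequality is where the pattern must be written down explicitly. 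Once such a pattern is exhibited, $\pi_{\pm}(C)=\bigcap_m\pi_{\pm}(C_m)$ is a decreasing intersection and $m^1(\pi_{\pm}(C))=\lim_m m^1(\pi_{\pm}(C_m))=0$, completing item (3).
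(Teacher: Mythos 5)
Your items (1) and (2) are correct and are essentially the paper's own argument: the even-level covering with $\#C_{2k}\,s_{2k}=1$ gives ${\mathcal H}^1(C)\le\sqrt2$, the $1$-Lipschitz projection $\pi_0$ together with Proposition \ref{prop_proj} gives the lower bound, and Proposition \ref{prop-fractal-cube} applied along the odd levels gives $\pdim(C)\ge 3/2$ (the paper enforces $a_{k+1}\gg E_k$ through the explicit conditions \eqref{eq_n-k-condition} rather than through a super-exponential formula, but the mechanism is identical). The problem is item (3), where your argument is conditional on an object you never construct.

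Concretely, everything in your item (3) hinges on exhibiting, for each pair of steps, a pattern of $2^{4a_k}$ fine squares that simultaneously (a) has exactly $2^{a_k}$ blocks in every coarse column (condition ${\bf (*)}_k$), (b) meets every fine column (condition ${\bf (*~*)}_k$, which forces exactly one square per fine column since the counts match), and (c) occupies only $O(2^{2a_k})$ anti-diagonal indices in \emph{both} diagonal directions. You correctly identify this as ``the real combinatorial obstruction'' and then leave it open (``where the pattern must be written down explicitly''); but this is not a routine verification one may defer --- the natural extremal constructions (rotated product sets, digit-restriction patterns, diagonal staircases) all concentrate into one block per coarse column and violate ${\bf (*)}_k$, and reconciling the two requires a genuinely two-level design. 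Moreover, the gap is essential to your specific plan and cannot be papered over by weakening (c): since your sequence uses each pair $(a_k,a_k)$ exactly once, the paper's much weaker device --- saving a single interval per step via ${\bf (***)}_k$ --- yields only $m^1(\pi_\pm(C))\le\sqrt2\prod_k\bigl(1-4^{-2a_k}\bigr)$, and this product is strictly positive for any rapidly growing $(a_k)$. The paper avoids your obstruction entirely by a different mechanism: it repeats each pair $(n_{2k-1},n_{2k})$ a large number $m_k$ of times, so that the trivial one-interval saving compounds (under $({\mathsf M})$) to $\bigl(1-4^{-(n_{2k-1}+n_{2k})}\bigr)^{m_k}<2^{-k}$, and then the growth conditions \eqref{eq_n-k-condition} restore $\pdim>1$ despite the repetitions. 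So either you must write down the extremal bidiagonally-thin pattern (which I believe exists, but is the hard part of the proof in your approach), or you must modify your sequence to include repetitions, at which point you have reproduced the paper's proof.
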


\begin{proof} It suffices to choose $n_{2k-1} = n_{2k}$, for all $k\ge 1$. We remark that a more general proof is possible, but we do not pursue it here.  Proceeding inductively  on pairs of indices, we first pick $n_1=n_2 = 1$, and we repeat $m_1$ times the construction with the pair $(n_1,n_2)$, so that 
\begin{equation}\label{eq_m-1-condition}
\left(1-\frac{1}{4^{n_1+n_2}}\right)^{m_1}<\frac12.
\end{equation}
Assuming that the construction has been repeated $m_k$ times on the pair $(n_{2k-1}, n_{2k})$, so that
\begin{equation}\label{eq_m-k-condition}
\left(1-\frac{1}{4^{n_{2k-1}+n_{2k}}}\right)^{m_k}<\frac{1}{2^{k}},
\end{equation}
we choose $(n_{2k+1}, n_{2k+2})$ satisfying 
\begin{equation}\label{eq_n-k-condition}
\frac{\sum_{j=1}^{k}m_j(3n_{2j-1}+n_{2j})}{n_{2k+1}}< \frac{1}{100} \hspace{10pt} \mbox{and} \hspace{10pt}  \frac{\sum_{j=1}^{k}m_j(2n_{2j-1}+2n_{2j})}{n_{2k+1}}< \frac{1}{100}
\end{equation}
and repeat $m_{k+1}$ times the construction on the pair $(n_{2k+1},n_{2k+2})$, so that 
\begin{equation}\label{eq_m-{k+1}-condition}
\left(1-\frac{1}{4^{n_{2k+1}+n_{2k+2}}}\right)^{m_{k+1}}<\frac{1}{2^{k+1}}.
\end{equation}
The resulting Cantor set $C$ is generated by the sequence
$$
(\underbrace{(n_1,n_2),\cdots, (n_1,n_2)}_{m_1 \ \text{times}}, \cdots, \underbrace{(n_{2k-1},n_{2k}),\cdots, (n_{2k-1},n_{2k})}_{m_k \ \text{times}}, \cdots),
$$
so that $(\ast)_k$, $(\ast~\ast)_k$, and $(\ast\ast\ast)_k$ are satisfied, for all $k\ge 1$.

\medskip

We next verify each of the conditions:

\noindent (1).    Let $\delta_k = 4^{-[m_1(n_1+n_2)\cdots+ m_k(n_{2k-1}+n_{2k})]}$ and $\delta_k' = \delta_k\cdot 4^{-n_{2k+1}}$. Denote by  $M_k$ and $M_k'$ the total number of squares of side lengths $\delta_k$ and $\delta_k'$, respectively. By our construction,  $M_{k} = 2^{m_1(3n_1+n_2)+m_2(3n_3+n_4)+\cdots+m_k(3n_{2k-1}+n_{2k})}$. Note that the squares of side length $\delta_k$ have diameter $\sqrt{2} \delta_{k}$ and form a covering of the set $C$, so 
 \begin{equation}\label{eq_hdim}
    \begin{aligned}
    {\mathcal H}^1(C)\le & \sqrt{2} \cdot  \liminf_{k\to\infty} (M_{k}\cdot \delta_{k})\\
    % = &\sqrt{2} \liminf_{k\to\infty} e^{\log (M_{2k}\cdot\delta_k)} \\
      = & \sqrt{2} \liminf_{k\to\infty} 2^{(m_1(n_1-n_2)+\cdots+m_k(n_{2k-1}-n_{2k}))}  = \sqrt{2}<\infty,
   \end{aligned}
    \end{equation}
since we have chosen $n_{2k-1} = n_{2k}$, for all $k\ge 1$. On the other hand, ${\mathcal H}^1(C) \geq {\mathcal H}^1(\pi_0(C))$, so by Proposition \ref{prop_proj}, ${\mathcal H}^1(C)>0$.

\smallskip

\noindent(2). Using Proposition \ref{prop-fractal-cube} for $C$, which is a special case of fractal cubes, the packing dimension of $C$ is 
$$
\begin{aligned}
\pdim(C)\ge& \limsup_{k\to\infty} \frac{\log M_{k}' }{-\log \delta_{k}'}\\
=& \limsup_{k\to\infty} \frac{\log 2^{[m_1(3n_1+n_2)+\cdots+m_k(n_{2k-1}+ n_{2k})]+3n_{2k+1}}}{\log 2^{2[m_1(n_1+n_2)\cdots +m_k(n_{2k-1}+n_{2k})+n_{2k+1}]}}\\
%=&\limsup_{k\to\infty} \frac{3n_1+n_2+\cdots 3n_{2k-3}+n_{2k-2} +3n_{2k-1}}{2n_1+\cdots+2n_{2k-2}+2n_{2k-1}}\\
= & \limsup_{k\to\infty} \frac{\frac{\sum_{j=1}^{k}m_j(3n_{2j-1}+n_{2j})}{n_{2k+1}} +3}{\frac{\sum_{j=1}^{k}m_j(2n_{2j-1}+2n_{2j})}{n_{2k+1}}+2}>1,
\end{aligned}
$$
where we used (\ref{eq_n-k-condition}) in the last line.

\smallskip

\noindent (3). To establish (3), we will require $C$ to satisfy $({\mathsf M})$.  We notice that the Cantor set $C$ is contained in $2^{3n_1+n_2}$ squares of side length $4^{-(n_1+n_2)}$. By ${\bf (***)}_1$,  there is at least one pair of squares that overlap exactly when $C$ is projected to $45^{\circ}$, so $\pi_{+}(C)$ is contained in at most $4^{n_1+n_2}-1$ many intervals of length $\sqrt{2}\cdot4^{-(n_1+n_2)}$. Hence, 
$$
m^1(\pi_+(C)) \le \sqrt{2} \cdot \left(1-\frac1{4^{n_1+n_2}}\right).
$$
Note that for each square of side length $4^{-(n_1+n_2)}$ in the first iteration of the construction, there are $2^{3n_1+n_2}$ squares of side length $4^{-2(n_1+n_2)}$ in the second stage (using indices $(n_1,n_2)$ still). Since at least one pair of such squares overlaps exactly when projected to $45^{\circ}$ and the Cantor set satisfies $({\mathsf M})$,  there are at most $(4^{n_1+n_2}-1)^2$ different intervals of length $\sqrt{2}\cdot 4^{-2(n_1+n_2)}$ after the projection. They cover $\pi_+(C)$, so 
$$
m^1(\pi_+(C)) \le \sqrt{2} \cdot \left(1-\frac1{4^{n_1+n_2}}\right)^2.
$$
Proceeding inductively, using ${\bf (***)}_k$, we find that 
$$
m^1(\pi_{+}C) \le \sqrt{2}\cdot \prod_{j=1}^{\infty}\left(1-\frac{1}{4^{n_{2j-1}+n_{2j}}}\right)^{m_j} \le \sqrt{2}\cdot\prod_{j=1}^{\infty} \frac{1}{2^j} = 0,
$$
by (\ref{eq_m-k-condition}). The same argument also works for $\pi_-(C)$. 
\end{proof}
%The orthogonal projection

%can be written as
%$$
%C = \bigcup_{i=1}^{2^{3n_1+n_2}} \frac{1}{4^{n_1+n_2}} (K_1+d_{1,i})
%$$
%where $K_1$ is the Cantor set generated by sequences $\{n_k\}_{k=3}^{\infty}$. Inductively, 
%$$
%K_{j+1} = \bigcup_{i=1}^{2^{3n_{2j+1}+n_{2j+2}}} \frac{1}{4^{n_{2j+1}+n_{2j+2}}} (K_j+d_{1,i})
%$$
%Let $\pi_{\pm}$ be the orthogonal projection onto $y=\pm x$ respectively. We will consider only  $\pi_+$ as $\pi_-$ follows from the same argument. 
%$$
%\pi_{+}C = \bigcup_{i=1}^{2^{3n_1+n_2}} \frac{1}{4^{n_1+n_2}} (\pi_{+}(K_1)+\pi_+(d_{1,i}))
%$$
%From $(\ast\ast\ast)_1$, there exists at least $i\ne i'$ such that  $\pi_{+}d_{1,i} = \pi_{+}d_{1,i'}$. Hence,   
%$$
%|\pi_{+}C| \le |\pi_+ (K_1)| \cdot  \left(1-\frac{1}{4^{n_1+n_2}}\right).
%$$
%Here, $|\cdot|$ denotes the one-dimensional Lebesgue measure of the set. Since $(\ast\ast\ast)_k$ holds for all $k$, we have 
%$$
%|\pi_{+}C| \le \prod_{j=1}^{\infty}\left(1-\frac{1}{4^{n_{2j-1}+n_{2j}}}\right)^{m_j} \le \prod_{j=1}^{\infty} \frac{1}{2^j} = 0
%$$
%by (\ref{eq_m-k-condition}). This completes the proof. 

\begin{proof}[Proof of Theorem \ref{main theorem}] On $\R^2$, we just take the Cantor set $C$ from Proposition \ref{prop3.3}. As remarked, Proposition \ref{prop_proj} gives us condition (1) of Proposition \ref{prop-K}, and Proposition \ref{prop3.3} gives us the conditions (2), (3), and (4). Hence, we can use $C$ to construct a compact non-sticky Kakeya set of measure zero. 

To prove the statement on $\R^d$, let $C$ be the same Cantor set, and define a compact set on $\R^{2d-2}$ by
$$
C' = \{(v,x,a,{\bf 0}): (v,a)\in C, \ x\in [0,1]^{d-2}\},
$$
where ${\bf 0}$ is the $(d-2)$-dimensional zero vector. Observe that $C'$ is isometric to  $ C\times [0,1]^{d-2}$. Using the product inequality for Hausdorff dimension (\ref{hdimeq1}) with $E = C$ and $F = [0,1]^{d-2}$ and Proposition \ref{prop3.3} (1), we see that 
$$
\hdim(C') = d-1.
$$
Using the product inequality for packing dimension (\ref{hdimeq2}) with $E = [0,1]^{d-2}$ and $F = C$ and Proposition \ref{prop3.3} (2), we see that 
$$
\pdim(C')>d-1.
$$
Furthermore, the projection of $C'$ onto the first $d-1$ coordinates is exactly $[0,1]^{d-1}$,
so by Proposition \ref{prop0}, a finite union of rotated copies of
$$
{\mathbb K}_0 = \bigcup_{(v,x,a,{\bf 0})\in C'}\{(a,{\bf 0},0)+t(v,x,1): t\in [0,1]\},
$$
is a Kakeya set in $\R^d$. Moreover,
the set ${\mathcal A}_{{\mathbb K}_0} = C'$, so this Kakeya set is non-sticky.

Finally, for each $t\in[0,1]$, the slice ${\mathbb K}_0\cap \{z=t\} = \pi_t(C)\times [0,1]^{d-2}$. By Proposition \ref{prop3.3} (3) and the Besicovitch projection theorem \ref{BPT}, for almost all $t$, $m^1(\pi_t(C)) = 0$, so almost all slices have measure zero. Thus, by Fubini's theorem, ${\mathbb K}_0$ has measure zero, so the Kakeya set does, as well.
\end{proof}

\begin{figure}[h]
\centering%\label{fig:enter-label}
\tikzset{every picture/.style={line width=0.75pt}} %set default line width to 0.75pt        

\tikzset{every picture/.style={line width=0.75pt}} %set default line width to 0.75pt        

\begin{tikzpicture}[x=0.75pt,y=0.75pt,yscale=-0.9,xscale=0.9]
%uncomment if require: \path (0,635); %set diagram left start at 0, and has height of 635

%Straight Lines [id:da5865383770619818] 
\draw    (30,210) -- (30,370.5) ;
%Straight Lines [id:da4366711532179782] 
\draw    (190,210) -- (190,370.5) ;
%Straight Lines [id:da7828586332177548] 
\draw    (30,210) -- (190,210) ;
%Straight Lines [id:da3885127030462192] 
\draw    (30,370) -- (190,370) ;
%Straight Lines [id:da8049179786243222] 
\draw    (70,210) -- (70,370.5) ;
%Straight Lines [id:da2071647161835911] 
\draw    (110,210) -- (110,370.5) ;
%Straight Lines [id:da6027566647564605] 
\draw    (150,210) -- (150,370.5) ;
%Straight Lines [id:da418476553822856] 
\draw    (30,250) -- (190,250) ;
%Straight Lines [id:da23436138525794403] 
\draw    (30,290) -- (190,290) ;
%Straight Lines [id:da5800592622849191] 
\draw    (30,330) -- (190,330) ;
%Shape: Rectangle [id:dp6221347082277446] 
\draw  [fill={rgb, 255:red, 209; green, 209; blue, 209 }  ,fill opacity=1 ] (150,330) -- (190,330) -- (190,370) -- (150,370) -- cycle ;
%Shape: Rectangle [id:dp8680447139828684] 
\draw  [fill={rgb, 255:red, 209; green, 209; blue, 209 }  ,fill opacity=1 ] (110,210) -- (150,210) -- (150,250) -- (110,250) -- cycle ;
%Shape: Rectangle [id:dp2879107039347084] 
\draw  [fill={rgb, 255:red, 209; green, 209; blue, 209 }  ,fill opacity=1 ] (30,250) -- (70,250) -- (70,290) -- (30,290) -- cycle ;
%Shape: Rectangle [id:dp27243874324342543] 
\draw  [fill={rgb, 255:red, 209; green, 209; blue, 209 }  ,fill opacity=1 ] (150,250) -- (190,250) -- (190,290) -- (150,290) -- cycle ;
%Shape: Rectangle [id:dp09925700229883838] 
\draw  [fill={rgb, 255:red, 209; green, 209; blue, 209 }  ,fill opacity=1 ] (70,210) -- (110,210) -- (110,250) -- (70,250) -- cycle ;
%Shape: Rectangle [id:dp45489694580178996] 
\draw  [fill={rgb, 255:red, 209; green, 209; blue, 209 }  ,fill opacity=1 ] (30,290) -- (70,290) -- (70,330) -- (30,330) -- cycle ;
%Shape: Rectangle [id:dp1131065422240598] 
\draw  [fill={rgb, 255:red, 209; green, 209; blue, 209 }  ,fill opacity=1 ] (70,330) -- (110,330) -- (110,370) -- (70,370) -- cycle ;
%Shape: Rectangle [id:dp7352092064506007] 
\draw  [fill={rgb, 255:red, 209; green, 209; blue, 209 }  ,fill opacity=1 ] (110,290) -- (150,290) -- (150,330) -- (110,330) -- cycle ;
%Straight Lines [id:da6351605664741911] 
\draw    (40,210) -- (40,370.5) ;
%Straight Lines [id:da13322308506336822] 
\draw    (50,210) -- (50,370.5) ;
%Straight Lines [id:da34113512553434755] 
\draw    (60,210) -- (60,370.5) ;
%Straight Lines [id:da7531773697670022] 
\draw    (80,210) -- (80,370.5) ;
%Straight Lines [id:da6622462561575253] 
\draw    (90,210) -- (90,370.5) ;
%Straight Lines [id:da6377094733035876] 
\draw    (100,210) -- (100,370.5) ;
%Straight Lines [id:da7192320464374102] 
\draw    (140,210) -- (140,370.5) ;
%Straight Lines [id:da20046641307762858] 
\draw    (120,210) -- (120,370.5) ;
%Straight Lines [id:da23588928169405632] 
\draw    (130,210) -- (130,370.5) ;
%Straight Lines [id:da8821856895885208] 
\draw    (160,210) -- (160,370.5) ;
%Straight Lines [id:da17735003718456488] 
\draw    (180,210) -- (180,370.5) ;
%Straight Lines [id:da5208112345492352] 
\draw    (170,210) -- (170,370.5) ;
%Straight Lines [id:da6985702639812635] 
\draw    (30,220) -- (190,220) ;
%Straight Lines [id:da3818284674671012] 
\draw    (30,230) -- (190,230) ;
%Straight Lines [id:da6955082870094881] 
\draw    (30,240) -- (190,240) ;
%Straight Lines [id:da18719529936646762] 
\draw    (30,260) -- (190,260) ;
%Straight Lines [id:da9405733855273233] 
\draw    (30,270) -- (190,270) ;
%Straight Lines [id:da012626869659479234] 
\draw    (30,280) -- (190,280) ;
%Straight Lines [id:da9353971614120882] 
\draw    (30,300) -- (190,300) ;
%Straight Lines [id:da4217761194818235] 
\draw    (30,310) -- (190,310) ;
%Straight Lines [id:da08411771851986516] 
\draw    (30,320) -- (190,320) ;
%Straight Lines [id:da35108746940722824] 
\draw    (29.5,341) -- (189.5,341) ;
%Straight Lines [id:da4167498531030205] 
\draw    (30,350) -- (190,350) ;
%Straight Lines [id:da1695702188738828] 
\draw    (30,360) -- (190,360) ;
%Shape: Rectangle [id:dp38235117505438754] 
\draw  [fill={rgb, 255:red, 0; green, 0; blue, 0 }  ,fill opacity=1 ] (30.5,261) -- (39.5,261) -- (39.5,269.5) -- (30.5,269.5) -- cycle ;
%Shape: Rectangle [id:dp6697581866689072] 
\draw  [fill={rgb, 255:red, 0; green, 0; blue, 0 }  ,fill opacity=1 ] (40.5,281) -- (49.5,281) -- (49.5,289.5) -- (40.5,289.5) -- cycle ;
%Shape: Rectangle [id:dp5997454315002974] 
\draw  [fill={rgb, 255:red, 0; green, 0; blue, 0 }  ,fill opacity=1 ] (60.5,321) -- (69.5,321) -- (69.5,329.5) -- (60.5,329.5) -- cycle ;
%Shape: Rectangle [id:dp48479048111503165] 
\draw  [fill={rgb, 255:red, 0; green, 0; blue, 0 }  ,fill opacity=1 ] (50.5,291) -- (59.5,291) -- (59.5,299.5) -- (50.5,299.5) -- cycle ;
%Shape: Rectangle [id:dp9232976482282921] 
\draw  [fill={rgb, 255:red, 0; green, 0; blue, 0 }  ,fill opacity=1 ] (80.5,351) -- (89.5,351) -- (89.5,359.5) -- (80.5,359.5) -- cycle ;
%Shape: Rectangle [id:dp9738482725957757] 
\draw  [fill={rgb, 255:red, 0; green, 0; blue, 0 }  ,fill opacity=1 ] (90.5,331) -- (99.5,331) -- (99.5,339.5) -- (90.5,339.5) -- cycle ;
%Shape: Rectangle [id:dp34064820509150406] 
\draw  [fill={rgb, 255:red, 0; green, 0; blue, 0 }  ,fill opacity=1 ] (160.5,341) -- (169.5,341) -- (169.5,349.5) -- (160.5,349.5) -- cycle ;
%Shape: Rectangle [id:dp1218868127489835] 
\draw  [fill={rgb, 255:red, 0; green, 0; blue, 0 }  ,fill opacity=1 ] (170.5,361) -- (179.5,361) -- (179.5,369.5) -- (170.5,369.5) -- cycle ;
%Shape: Rectangle [id:dp04746595217488481] 
\draw  [fill={rgb, 255:red, 0; green, 0; blue, 0 }  ,fill opacity=1 ] (150.5,251) -- (159.5,251) -- (159.5,259.5) -- (150.5,259.5) -- cycle ;
%Shape: Rectangle [id:dp3702022199592603] 
\draw  [fill={rgb, 255:red, 0; green, 0; blue, 0 }  ,fill opacity=1 ] (170.5,271) -- (179.5,271) -- (179.5,279.5) -- (170.5,279.5) -- cycle ;
%Shape: Rectangle [id:dp6985612959686641] 
\draw  [fill={rgb, 255:red, 0; green, 0; blue, 0 }  ,fill opacity=1 ] (90.5,221) -- (99.5,221) -- (99.5,229.5) -- (90.5,229.5) -- cycle ;
%Shape: Rectangle [id:dp08944701913596398] 
\draw  [fill={rgb, 255:red, 0; green, 0; blue, 0 }  ,fill opacity=1 ] (70.5,231) -- (79.5,231) -- (79.5,239.5) -- (70.5,239.5) -- cycle ;
%Shape: Rectangle [id:dp26316331673285653] 
\draw  [fill={rgb, 255:red, 0; green, 0; blue, 0 }  ,fill opacity=1 ] (130.5,301) -- (139.5,301) -- (139.5,309.5) -- (130.5,309.5) -- cycle ;
%Shape: Rectangle [id:dp9357878836830587] 
\draw  [fill={rgb, 255:red, 0; green, 0; blue, 0 }  ,fill opacity=1 ] (120.5,311) -- (129.5,311) -- (129.5,319.5) -- (120.5,319.5) -- cycle ;
%Shape: Rectangle [id:dp3927678802817345] 
\draw  [fill={rgb, 255:red, 0; green, 0; blue, 0 }  ,fill opacity=1 ] (110.5,211) -- (119.5,211) -- (119.5,219.5) -- (110.5,219.5) -- cycle ;
%Shape: Rectangle [id:dp19394489873546683] 
\draw  [fill={rgb, 255:red, 0; green, 0; blue, 0 }  ,fill opacity=1 ] (130.5,241) -- (139.5,241) -- (139.5,249.5) -- (130.5,249.5) -- cycle ;
%Straight Lines [id:da15707363508189454] 
\draw    (20,370) -- (20,341.5) ;
\draw [shift={(20,339.5)}, rotate = 90] [color={rgb, 255:red, 0; green, 0; blue, 0 }  ][line width=0.75]    (10.93,-3.29) .. controls (6.95,-1.4) and (3.31,-0.3) .. (0,0) .. controls (3.31,0.3) and (6.95,1.4) .. (10.93,3.29)   ;
%Straight Lines [id:da1921748150883169] 
\draw    (30,380) -- (57.5,380) ;
\draw [shift={(59.5,380)}, rotate = 180] [color={rgb, 255:red, 0; green, 0; blue, 0 }  ][line width=0.75]    (10.93,-3.29) .. controls (6.95,-1.4) and (3.31,-0.3) .. (0,0) .. controls (3.31,0.3) and (6.95,1.4) .. (10.93,3.29)   ;
%Straight Lines [id:da21660873787711143] 
\draw    (174,369.25) -- (333.5,365) ;
%Straight Lines [id:da7987939797722882] 
\draw    (85,370.25) -- (244,355) ;
%Straight Lines [id:da08438595157403828] 
\draw    (165,369.25) -- (323,342.5) ;
%Straight Lines [id:da6690345110314565] 
\draw    (94.75,370.25) -- (251.5,337) ;
%Straight Lines [id:da060014477058942584] 
\draw    (63.75,370.25) -- (217.5,325.5) ;
%Straight Lines [id:da6383748598484137] 
\draw    (125.75,370.25) -- (275.5,314.5) ;
%Straight Lines [id:da7878245443128803] 
\draw    (137.75,370.25) -- (285,305.5) ;
%Straight Lines [id:da22525233797185407] 
\draw    (52,370.25) -- (192,293) ;
%Straight Lines [id:da8144222750527462] 
\draw    (43.75,370.25) -- (179.25,285.5) ;
%Straight Lines [id:da7270665021771172] 
\draw    (178,370.25) -- (305.5,275) ;
%Straight Lines [id:da2778736806685218] 
\draw    (32.75,370.25) -- (151.5,263.5) ;
%Straight Lines [id:da023449028151872775] 
\draw    (152.75,370.25) -- (263.5,255.5) ;
%Straight Lines [id:da5656627150633889] 
\draw    (135.75,370.25) -- (235.5,244.5) ;
%Straight Lines [id:da5171931340939042] 
\draw    (77.75,370.25) -- (164,236) ;
%Straight Lines [id:da4707178960693944] 
\draw    (97,370.25) -- (168,227.5) ;
%Straight Lines [id:da8435392022104594] 
\draw    (117.75,370.25) -- (163,217) ;
%Straight Lines [id:da7223449864760659] 
\draw    (340,210) -- (340,370.5) ;
%Straight Lines [id:da2963514328500636] 
\draw    (500,210) -- (500,370.5) ;
%Straight Lines [id:da4431075464693728] 
\draw    (340,210) -- (500,210) ;
%Straight Lines [id:da22160518264317564] 
\draw    (340,370) -- (500,370) ;
%Straight Lines [id:da26829505594419334] 
\draw    (380,210) -- (380,370.5) ;
%Straight Lines [id:da7763590489824668] 
\draw    (420,210) -- (420,370.5) ;
%Straight Lines [id:da9126216630999834] 
\draw    (460,210) -- (460,370.5) ;
%Straight Lines [id:da41957616224068184] 
\draw    (340,250) -- (500,250) ;
%Straight Lines [id:da11326225018687364] 
\draw    (340,290) -- (500,290) ;
%Straight Lines [id:da22636154964397925] 
\draw    (340,330) -- (500,330) ;
%Shape: Rectangle [id:dp5880686880694576] 
\draw  [fill={rgb, 255:red, 209; green, 209; blue, 209 }  ,fill opacity=1 ] (340,250) -- (380,250) -- (380,290) -- (340,290) -- cycle ;
%Shape: Rectangle [id:dp5927339874616241] 
\draw  [fill={rgb, 255:red, 209; green, 209; blue, 209 }  ,fill opacity=1 ] (420,210) -- (460,210) -- (460,250) -- (420,250) -- cycle ;
%Shape: Rectangle [id:dp11996085724896854] 
\draw  [fill={rgb, 255:red, 209; green, 209; blue, 209 }  ,fill opacity=1 ] (460,290) -- (500,290) -- (500,330) -- (460,330) -- cycle ;
%Shape: Rectangle [id:dp8470136308406865] 
\draw  [fill={rgb, 255:red, 209; green, 209; blue, 209 }  ,fill opacity=1 ] (380,330) -- (420,330) -- (420,370) -- (380,370) -- cycle ;
%Straight Lines [id:da5904374358210346] 
\draw    (350,210) -- (350,370.5) ;
%Straight Lines [id:da0425155868186049] 
\draw    (360,210) -- (360,370.5) ;
%Straight Lines [id:da12316650257569017] 
\draw    (370,210) -- (370,370.5) ;
%Straight Lines [id:da22776522448704917] 
\draw    (390,210) -- (390,370.5) ;
%Straight Lines [id:da9612521919726197] 
\draw    (400,210) -- (400,370.5) ;
%Straight Lines [id:da5235715258511638] 
\draw    (410,210) -- (410,370.5) ;
%Straight Lines [id:da3407494012048269] 
\draw    (450,210) -- (450,370.5) ;
%Straight Lines [id:da5710422113118683] 
\draw    (430,210) -- (430,370.5) ;
%Straight Lines [id:da8053906016193539] 
\draw    (440,210) -- (440,370.5) ;
%Straight Lines [id:da4794628800591235] 
\draw    (470,210) -- (470,370.5) ;
%Straight Lines [id:da03435286709830232] 
\draw    (490,210) -- (490,370.5) ;
%Straight Lines [id:da5264112896831527] 
\draw    (480,210) -- (480,370.5) ;
%Straight Lines [id:da6902709299400114] 
\draw    (340,220) -- (500,220) ;
%Straight Lines [id:da5932160176642025] 
\draw    (340,230) -- (500,230) ;
%Straight Lines [id:da5772975332243943] 
\draw    (340,240) -- (500,240) ;
%Straight Lines [id:da7623263332990826] 
\draw    (340,260) -- (500,260) ;
%Straight Lines [id:da12520926487111872] 
\draw    (340,270) -- (500,270) ;
%Straight Lines [id:da26827247791442044] 
\draw    (340,280) -- (500,280) ;
%Straight Lines [id:da5228969908722986] 
\draw    (340,300) -- (500,300) ;
%Straight Lines [id:da719118413241339] 
\draw    (340,310) -- (500,310) ;
%Straight Lines [id:da39848057728974084] 
\draw    (340,320) -- (500,320) ;
%Straight Lines [id:da7243077044670628] 
\draw    (339.5,341) -- (499.5,341) ;
%Straight Lines [id:da9016170770608452] 
\draw    (340,350) -- (500,350) ;
%Straight Lines [id:da6894366665558104] 
\draw    (340,360) -- (500,360) ;
%Shape: Rectangle [id:dp9888754689751629] 
\draw  [fill={rgb, 255:red, 0; green, 0; blue, 0 }  ,fill opacity=1 ] (340.5,261) -- (349.5,261) -- (349.5,269.5) -- (340.5,269.5) -- cycle ;
%Shape: Rectangle [id:dp6838939051587535] 
\draw  [fill={rgb, 255:red, 0; green, 0; blue, 0 }  ,fill opacity=1 ] (350.5,281) -- (359.5,281) -- (359.5,289.5) -- (350.5,289.5) -- cycle ;
%Shape: Rectangle [id:dp31126262136168914] 
\draw  [fill={rgb, 255:red, 0; green, 0; blue, 0 }  ,fill opacity=1 ] (370.5,271) -- (379.5,271) -- (379.5,279.5) -- (370.5,279.5) -- cycle ;
%Shape: Rectangle [id:dp7632492947623032] 
\draw  [fill={rgb, 255:red, 0; green, 0; blue, 0 }  ,fill opacity=1 ] (360.5,251) -- (369.5,251) -- (369.5,259.5) -- (360.5,259.5) -- cycle ;
%Shape: Rectangle [id:dp43713794794648153] 
\draw  [fill={rgb, 255:red, 0; green, 0; blue, 0 }  ,fill opacity=1 ] (390.5,361) -- (399.5,361) -- (399.5,369.5) -- (390.5,369.5) -- cycle ;
%Shape: Rectangle [id:dp5921169794613068] 
\draw  [fill={rgb, 255:red, 0; green, 0; blue, 0 }  ,fill opacity=1 ] (380.5,341) -- (389.5,341) -- (389.5,349.5) -- (380.5,349.5) -- cycle ;
%Shape: Rectangle [id:dp837031624388089] 
\draw  [fill={rgb, 255:red, 0; green, 0; blue, 0 }  ,fill opacity=1 ] (470.5,321) -- (479.5,321) -- (479.5,329.5) -- (470.5,329.5) -- cycle ;
%Shape: Rectangle [id:dp6163400244460838] 
\draw  [fill={rgb, 255:red, 0; green, 0; blue, 0 }  ,fill opacity=1 ] (490.5,311) -- (499.5,311) -- (499.5,319.5) -- (490.5,319.5) -- cycle ;
%Shape: Rectangle [id:dp8383259312602379] 
\draw  [fill={rgb, 255:red, 0; green, 0; blue, 0 }  ,fill opacity=1 ] (430.5,241) -- (439.5,241) -- (439.5,249.5) -- (430.5,249.5) -- cycle ;
%Shape: Rectangle [id:dp21014362509447349] 
\draw  [fill={rgb, 255:red, 0; green, 0; blue, 0 }  ,fill opacity=1 ] (450.5,231) -- (459.5,231) -- (459.5,239.5) -- (450.5,239.5) -- cycle ;
%Shape: Rectangle [id:dp0580238017967909] 
\draw  [fill={rgb, 255:red, 0; green, 0; blue, 0 }  ,fill opacity=1 ] (480.5,291) -- (489.5,291) -- (489.5,299.5) -- (480.5,299.5) -- cycle ;
%Shape: Rectangle [id:dp7316586376923316] 
\draw  [fill={rgb, 255:red, 0; green, 0; blue, 0 }  ,fill opacity=1 ] (460.5,301) -- (469.5,301) -- (469.5,309.5) -- (460.5,309.5) -- cycle ;
%Shape: Rectangle [id:dp809377965816344] 
\draw  [fill={rgb, 255:red, 0; green, 0; blue, 0 }  ,fill opacity=1 ] (410.5,351) -- (419.5,351) -- (419.5,359.5) -- (410.5,359.5) -- cycle ;
%Shape: Rectangle [id:dp34947468947491434] 
\draw  [fill={rgb, 255:red, 0; green, 0; blue, 0 }  ,fill opacity=1 ] (400.5,331) -- (409.5,331) -- (409.5,339.5) -- (400.5,339.5) -- cycle ;
%Shape: Rectangle [id:dp2929674222758232] 
\draw  [fill={rgb, 255:red, 0; green, 0; blue, 0 }  ,fill opacity=1 ] (420.5,221) -- (429.5,221) -- (429.5,229.5) -- (420.5,229.5) -- cycle ;
%Shape: Rectangle [id:dp8980045273989663] 
\draw  [fill={rgb, 255:red, 0; green, 0; blue, 0 }  ,fill opacity=1 ] (440.5,211) -- (449.5,211) -- (449.5,219.5) -- (440.5,219.5) -- cycle ;
%Straight Lines [id:da404356925365305] 
\draw    (394,369.25) -- (553.5,365) ;
%Straight Lines [id:da935561221820496] 
\draw    (415,370.25) -- (574,355) ;
%Straight Lines [id:da11469664074351127] 
\draw    (385,369.25) -- (543,342.5) ;
%Straight Lines [id:da10118547500479758] 
\draw    (404.75,370.25) -- (561.5,337) ;
%Straight Lines [id:da7772460851526727] 
\draw    (473.75,370.25) -- (627.5,325.5) ;
%Straight Lines [id:da5419016339531515] 
\draw    (495.75,370.25) -- (645.5,314.5) ;
%Straight Lines [id:da6491170395251705] 
\draw    (467.75,370.25) -- (615,305.5) ;
%Straight Lines [id:da9701523875497381] 
\draw    (482,370.25) -- (622,293) ;
%Straight Lines [id:da23001104571709297] 
\draw    (353.75,370.25) -- (489.25,285.5) ;
%Straight Lines [id:da1063149260684143] 
\draw    (378,370.25) -- (505.5,275) ;
%Straight Lines [id:da9336312548020848] 
\draw    (342.75,370.25) -- (461.5,263.5) ;
%Straight Lines [id:da02070741808890475] 
\draw    (362.75,370.25) -- (473.5,255.5) ;
%Straight Lines [id:da5297648744640109] 
\draw    (435.75,370.25) -- (535.5,244.5) ;
%Straight Lines [id:da1251492584155608] 
\draw    (427,370.25) -- (498,227.5) ;
%Straight Lines [id:da9467327809805822] 
\draw    (447.75,370.25) -- (493,217) ;
%Straight Lines [id:da39615533382402346] 
\draw    (73.75,370.25) -- (163,236.5) ;
%Straight Lines [id:da13983417879865823] 
\draw    (453.75,370.25) -- (543,236.5) ;
%Straight Lines [id:da6495784070751738] 
\draw    (427,370.25) -- (498,227.5) ;
%Straight Lines [id:da8559678129801195] 
\draw    (447.75,370.25) -- (493,217) ;

% Text Node
\draw (34,382) node [anchor=north west][inner sep=0.75pt]    {$a$};
% Text Node
\draw (6,354) node [anchor=north west][inner sep=0.75pt]    {$v$};

\end{tikzpicture}
\caption{An illustration of $\mathbb{K}_0$ for $K = C$, as in Figure \ref{figure1}, which gives rise to a non-sticky Kakeya set (left), and for a self-similar $K$ that gives rise to a sticky Kakeya set (right). For ease of interpretation, $K$ was reflected across the line $y=x$.}
\label{Fig-3}
\end{figure}
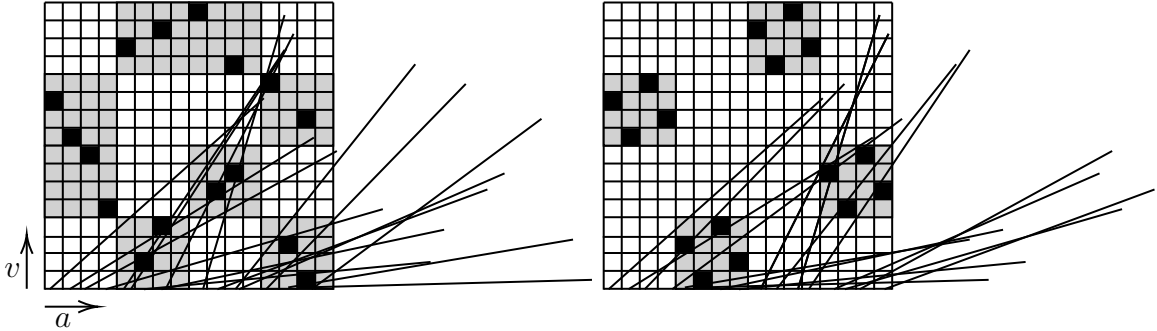

\section{Non-Trivial Constructions of Higher-Dimensional Kakeya sets}\label{section4}

In this section, we provide simple constructions of measure zero sticky and non-sticky Kakeya sets on $\R^d$ that are {\it not} given by a Cartesian product of a two-dimensional Kakeya set with $\R^{d-2}$. This appears to be unknown in the literature.  We also prove that both sets has Hausdorff dimension $d$.

\subsection{A Sticky Kakeya set on $\R^d$.} Instead of using the point-line duality presentation (\ref{eqL_va}), we construct the sticky Kakeya set by drawing lines between two Cantor sets as in \cite{Stein-real}, so that readers can visualize the construction easily (see Figure \ref{Fig-2}). Let $C_0$ be the self-similar Cantor set on $\R^1$ consisting of points whose $4$-adic expansions have digits $0,3$ only, i.e.,
$$
C_0 = \left\{\sum_{j=1}^{\infty}4^{-j}\varepsilon_j: \varepsilon_j\in\{0,3\}\right\}.
$$
On $\R^{d-1}$, let $C_{d-1} = C_0\times \cdots \times C_0$ ($d-1$ times).
 A crucial observation to proving our main theorem is that for all $t\in\R$, the Minkowski sum
    $$
    C_{d-1}+tC_{d-1} = (C_0+tC_0)\times\cdots\times (C_0+tC_0).
    $$

\begin{lem}\label{lemma4.1} The set $C_{d-1}+tC_{d-1}$ has the following properties:
      \begin{enumerate}
        \item  When $t = -1/2$, $C_0+tC_0 =[-\frac12,1]$ and  $C_{d-1}+tC_{d-1} = [-\frac12,1]^{d-1}$.
        \item $m^{d-1}(C_{d-1}+tC_{d-1}) = 0$, for almost all $t\in\R$. 
        \item $\hdim (C_{d-1}+tC_{d-1}) = d-1$, for almost all $t\in\R$. 
    \end{enumerate}
\end{lem}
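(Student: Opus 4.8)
The plan is to reduce all three statements to one-dimensional facts about the sum $C_0+tC_0=\pi_t(C_0\times C_0)$, where $\pi_t(x,y)=x+ty$, and then transport them through the product identity $C_{d-1}+tC_{d-1}=(C_0+tC_0)^{d-1}$ recorded just before the lemma. Throughout I write $K:=C_0\times C_0$, which is the four-corner Cantor set, i.e.\ the self-similar attractor of the four corner maps of the unit square at ratio $\tfrac14$.

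For (1) I would compute $C_0-\tfrac12 C_0$ directly from the $4$-adic digits. Writing $x=\sum_{j}4^{-j}\varepsilon_j$ and $y=\sum_{j}4^{-j}\delta_j$ with $\varepsilon_j,\delta_j\in\{0,3\}$, the coefficient $\varepsilon_j-\tfrac12\delta_j$ ranges over $\{-\tfrac32,0,\tfrac32,3\}=\tfrac32\{-1,0,1,2\}$, so $C_0-\tfrac12 C_0=\tfrac32\,T$ with $T=\{\sum_{j\ge1}4^{-j}d_j:d_j\in\{-1,0,1,2\}\}$. Since $\{-1,0,1,2\}$ is a complete set of base-$4$ digits (four consecutive integers), $T$ is the attractor of the four maps $u\mapsto (u+d)/4$ whose images tile $[-\tfrac13,\tfrac23]$ exactly, giving $T=[-\tfrac13,\tfrac23]$ and hence $C_0-\tfrac12 C_0=\tfrac32[-\tfrac13,\tfrac23]=[-\tfrac12,1]$. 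The product identity upgrades this to $[-\tfrac12,1]^{d-1}$.

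For (2) I would apply Theorem \ref{BPT} to $K$. Its stage-$n$ cover by $4^n$ squares of diameter $\sqrt2\,4^{-n}$ gives $\mathcal H^1(K)\le\sqrt2<\infty$, while positivity follows from (1): as $\pi_{-1/2}$ is Lipschitz and $\pi_{-1/2}(K)=[-\tfrac12,1]$ has positive length, $\mathcal H^1(K)>0$. With $0<\mathcal H^1(K)<\infty$ secured, I would exhibit two measure-zero directions, namely $\pi_0(K)=C_0$ (dimension $\tfrac12$) and $\pi_1(K)=C_0+C_0=3\{\sum_j 4^{-j}e_j:e_j\in\{0,1,2\}\}$, a self-similar set of dimension $\log3/\log4<1$. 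Then Theorem \ref{BPT} gives $m^1(C_0+tC_0)=0$ for a.e.\ $t$, and since $(C_0+tC_0)^{d-1}\subset(C_0+tC_0)\times\R^{d-2}$, this forces $m^{d-1}(C_{d-1}+tC_{d-1})=0$ for a.e.\ $t$.

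For (3) the lower bound is the crux. I would invoke Marstrand's projection theorem (see \cite{falconer1985geometry}) for $K$, which has $\hdim K=1$, to get $\hdim \pi_\theta(K)=1$ for a.e.\ direction; since $\pi_t$ differs from the orthogonal projection by a scalar (so preserves Hausdorff dimension) and $t\mapsto\theta$ is a diffeomorphism off the vertical, this yields $\hdim(C_0+tC_0)=1$ for a.e.\ $t$. Applying the product inequality (\ref{hdimeq1}) inductively gives $\hdim(C_0+tC_0)^{d-1}\ge(d-1)\hdim(C_0+tC_0)=d-1$, and the ambient bound gives $\le d-1$, so equality holds a.e. I expect the work to be bookkeeping rather than conceptual, with two points deserving care: verifying $0<\mathcal H^1(K)<\infty$ cleanly (the shortcut through (1) avoids invoking the open set condition), and the fact that the \emph{same} set $K$ has measure-zero projections in a.e.\ direction yet full-dimensional projections in a.e.\ direction. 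Both exceptional sets are null, so the two conclusions coexist, and it is precisely this coexistence that later makes the Kakeya set measure zero but of full Hausdorff dimension.
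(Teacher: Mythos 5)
Your proof is correct, and its skeleton is the same as the paper's: reduce everything to the one\-dimensional sum $C_0+tC_0=\pi_t(C_0\times C_0)$ via the product identity, then get (3) from Marstrand's projection theorem plus the product inequality \eqref{hdimeq1}. The differences lie in how you source the one\-dimensional inputs, and they make your version more self\-contained. For (1) the paper only says ``direct calculation''; your digit argument (digit set $\tfrac32\{-1,0,1,2\}$, four consecutive integers, hence a full interval after rescaling) is exactly that calculation carried out. For (2) the paper cites the literature for $m^1(C_0+tC_0)=0$ a.e., whereas you derive it from the paper's own Theorem \ref{BPT} applied to $K=C_0\times C_0$: the covering bound ${\mathcal H}^1(K)\le\sqrt2$, positivity of ${\mathcal H}^1(K)$ recycled from (1) via the Lipschitz map $\pi_{-1/2}$, and the two null projections $\pi_0(K)=C_0$ and $\pi_1(K)=C_0+C_0$ (dimensions $\tfrac12$ and $\log 3/\log 4$). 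This buys a proof entirely inside the paper's toolkit and, as a by\-product, pins down $\hdim K=1$, a fact the paper's proof of (3) uses but leaves implicit. In (3) you close the upper bound with the trivial ambient bound $\hdim\le d-1$, while the paper routes through $\pdim(C_0+tC_0)=1$ and the right\-hand side of \eqref{hdimeq1}; both work, yours is leaner. Your remark that $\pi_t$ is only a scalar multiple of the orthogonal projection, with $t\mapsto\theta$ a diffeomorphism off the vertical, is also a more careful handling of the a.e.\ transfer than the paper's phrasing.
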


\begin{proof}
    Property (1) follows from a direct calculation. For (2), it is known that $m^1(C_0+tC_0) = 0$, for almost all $t\in \R$ (see, e.g., \cite{Stein-real} or \cite{Mattila_2015}). By Fubini's theorem,   $m^{d-1}(C_{d-1}+tC_{d-1}) = 0$ follows.

    To see (3), it is well-known that $C_0+tC_0$ is the image of the orthogonal projection of the Cantor set $C_0\times C_0$ in $\R^2$ onto the subspace $y = tx$. By Marstrand's projection theorem (see, e.g., \cite[Section 2.3]{BKS2024}), for almost all $t\in \R$,  $\hdim(C_0+tC_0) = \hdim (C_0\times C_0) = 1$. Since $\pdim(C_0+tC_0)$ is bounded below by $\hdim(C_0+tC_0)$, we have $\pdim(C_0+tC_0) = 1$, as well. Hence, the product inequality for Hausdorff dimension \eqref{hdimeq1} implies $\hdim(C_{d-1}+tC_{d-1}) = d-1$, for almost all $t\in\R$.
\end{proof}

Let $C = \frac12C_{d-1}\times \{0\}$ and $C' = C_{d-1}\times \{1\}$, and for each $a,b\in \R^d$, let $\ell(a,b)$ denote the line joining $a,b$. Define
$$
{\mathbb B} = \bigcup_{(a,b)\in C\times C'} \ell(a,b).
$$
\begin{theorem}
   A finite union of rotated copies of ${\mathbb B}$ is a compact Kakeya set. This Kakeya set is sticky, has Lebesgue measure zero, and has Hausdorff dimension $d$. 
\end{theorem}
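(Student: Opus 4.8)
My plan is to realize $\mathbb{B}$ as an instance of Proposition \ref{prop0} and then verify the three analytic properties by slicing at constant height $x_d=s$. Parametrize the segment from $a=(\tfrac12 u,0)$ to $b=(w,1)$, with $u,w\in C_{d-1}$, by $L_{\mathbf v,\mathbf a}$ with $\mathbf a=\tfrac12 u$ and $\mathbf v=w-\tfrac12 u$; then $\ell(a,b)=L_{\mathbf v,\mathbf a}$, so $\mathbb{B}=\mathbb{K}_0$ for the compact parameter set $K_{\mathbb{B}}=\Phi(C_{d-1}\times C_{d-1})$, where $\Phi(u,w)=(w-\tfrac12 u,\tfrac12 u)$ is a linear bijection of $\R^{2(d-1)}$. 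The set $\mathbb{B}$ is compact, being the image of the compact set $C\times C'\times[0,1]$ under $(a,b,t)\mapsto(1-t)a+tb$. By Lemma \ref{lemma4.1}(1) applied with $t=-\tfrac12$, the direction component $\pi_0(K_{\mathbb{B}})=C_{d-1}-\tfrac12 C_{d-1}=[-\tfrac12,1]^{d-1}$ has nonempty interior, so Proposition \ref{prop0} produces a compact Kakeya set $\mathbb{K}$ as a finite union of rotated copies of $\mathbb{B}$.

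For the measure and dimension of $\mathbb{B}$ I would use that, for $s\in(0,1)$,
$$
\mathbb{B}\cap\{x_d=s\}=\Big((1-s)\tfrac12 C_{d-1}+s\,C_{d-1}\Big)\times\{s\}=s\big(C_{d-1}+t\,C_{d-1}\big)\times\{s\},\qquad t=\tfrac{1-s}{2s},
$$
a scaled copy of $C_{d-1}+tC_{d-1}$, and that $s\mapsto t$ is a diffeomorphism of $(0,1)$ onto $(0,\infty)$, so that null sets of $s$ and of $t$ correspond. Lemma \ref{lemma4.1}(2) then makes $m^{d-1}$ of the slice vanish for almost every $s$, whence $m^d(\mathbb{B})=0$ by Fubini; since Lebesgue-nullity is preserved by rotations and finite unions, $\mathbb{K}$ has measure zero. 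For the Hausdorff dimension, Lemma \ref{lemma4.1}(3) together with scaling-invariance of $\hdim$ gives that the slice has dimension $d-1$ for almost every $s$; hence for each $\gamma<d-1$ the slices carry infinite $\mathcal H^{\gamma}$-measure on a set of $s$ of positive length, and a standard Fubini-type slicing inequality for Hausdorff measures (see, e.g., \cite{falconer1985geometry,Mattila_2015}) forces $\mathcal H^{\gamma+1}(\mathbb{B})>0$. Letting $\gamma\uparrow d-1$ gives $\hdim\mathbb{B}\ge d$, and since $\mathbb{K}\subseteq\R^d$ contains the copy $\mathbb{B}$, we obtain $\hdim\mathbb{K}=d$.

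The remaining and most delicate claim is stickiness, $\pdim\mathcal A_{\mathbb{K}}=d-1$; as $\pdim\mathcal A_{\mathbb{K}}\ge d-1$ is automatic, everything rests on the upper bound. Because $C_0$ is self-similar and satisfies the open set condition, $\pdim\bigl(C_0^{2(d-1)}\bigr)=2(d-1)\cdot\tfrac12=d-1$, and bi-Lipschitz invariance of packing dimension gives $\pdim K_{\mathbb{B}}=\pdim\Phi(C_{d-1}\times C_{d-1})=d-1$. Thus the defining lines already have packing dimension exactly $d-1$, and the crux is the rigidity statement that they are the \emph{only} lines meeting $\mathbb{B}$ in a segment of positive length. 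This is the main obstacle. A line carrying such a segment is an affine graph $s\mapsto(\mathbf a+s\mathbf v,s)$ with $\mathbf a+s\mathbf v\in(1-s)\tfrac12 C_{d-1}+s\,C_{d-1}$ on an interval of heights, and I expect the totally disconnected, self-similar structure of $C_0$—tracking the $4$-adic digits of $\mathbf a+s\mathbf v$ as $s$ varies over that interval—to force $\mathbf a=\tfrac12 u$ and $\mathbf a+\mathbf v=w$ for fixed $u,w\in C_{d-1}$, exactly as the four-corner construction satisfies $\mathcal A_{\mathbb{K}}=K$. Granting this rigidity, the passage to $\mathbb{K}=\bigcup_i R_i\mathbb{B}$ is bookkeeping: any unit segment $S$ in $\mathbb{K}$ contains, by a Baire-category argument applied to the finite closed cover $\{S\cap R_i\mathbb{B}\}$, a positive-length subsegment inside a single $R_i\mathbb{B}$, so rigidity identifies its line with a rotated defining line; since each rotation acts as a smooth, locally bi-Lipschitz change of coordinates on the non-horizontal line space $\mathcal A$, it preserves packing dimension, and finite stability of $\pdim$ yields $\pdim\mathcal A_{\mathbb{K}}\le\max_i\pdim R_i(K_{\mathbb{B}})=d-1$.
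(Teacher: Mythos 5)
Your proposal follows the paper's proof essentially step for step: the Kakeya property from the direction set $C'-C=(C_{d-1}-\tfrac12 C_{d-1})\times\{1\}$ having nonempty interior (Lemma \ref{lemma4.1}(1)); measure zero from the slices being affine copies of $C_{d-1}+tC_{d-1}$, Lemma \ref{lemma4.1}(2), and Fubini; dimension $d$ from Lemma \ref{lemma4.1}(3) plus slicing (the paper cites Marstrand's slicing theorem where you invoke the equivalent Fubini-type inequality for Hausdorff measures); and the dimension count for stickiness from the fact that the defining parameter set is a linear, hence bi-Lipschitz, copy of $C_{d-1}\times C_{d-1}$, so its packing dimension is $2(d-1)\cdot\tfrac12=d-1$. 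Your extra care --- the correct change of variables $t=\tfrac{1-s}{2s}$ (the paper's $t=\tfrac{1-\lambda}{\lambda}$ drops a harmless factor of $2$), and the Baire/pigeonhole reduction of a unit segment in the finite union to a positive-length subsegment inside a single rotated copy --- are refinements of, not departures from, the paper's argument.

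The step you isolate as the crux and cannot close --- that the defining lines are the \emph{only} lines meeting $\mathbb{B}$ in a positive-length segment --- is not closed in the paper either: the paper simply writes ``we compute ${\mathcal A}_{\mathbb B}=\bigcup_{a\in\frac12 C_{d-1}}(C_{d-1}-a)\times\{a\}$,'' which is exactly this rigidity assertion, stated without justification. The point is material only in the sticky direction: for the paper's non-sticky examples the trivial containment ${\mathcal A}_{{\mathbb K}_0}\supseteq K$ already gives the required \emph{lower} bound on packing dimension, whereas stickiness is an \emph{upper} bound on $\pdim{\mathcal A}_{\mathbb K}$ and therefore requires control of all lines containing unit segments of the union of rotated copies. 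Be aware also that your hoped-for argument via total disconnectedness and $4$-adic digit tracking cannot work as stated: the slices of $\mathbb{B}$ at heights $s=\tfrac15$ and $s=\tfrac12$ are full $(d-1)$-dimensional boxes, because $C_0+2C_0=[0,3]$ and $C_0+\tfrac12 C_0=[0,\tfrac32]$ are intervals (the same resonance phenomenon as Lemma \ref{lemma4.1}(1) with $t=-\tfrac12$), so the representation of points of $\mathbb{B}$ by parameter pairs is massively non-unique near those heights, and any rigidity proof must handle segments crossing them. In short: you have reproduced the paper's argument and, beyond that, correctly surfaced a gap the paper leaves silent; closing it --- equivalently, ruling out ``rogue'' segments, which amounts to a radial-projection-type question about the four-corner Cantor set --- is genuine work done neither in your write-up nor in the paper.
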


\begin{proof}
The set of all directions in ${\mathbb B}$ is exactly equal to 
$$
C'-C = \left(C_{d-1}-\frac12 C_{d-1} \right)\times \{1\}.
 $$   
By Lemma \ref{lemma4.1} (1), the direction set covers a region in $\R^{d-1}\times \{1\}$ with non-empty interior, so the projection of this set onto the unit sphere $S^{d-1}$ has non-empty interior. Thus, a finite union of rotations of ${\mathbb B}$ will have its directions cover $S^{d-1}$, hence the union is a Kakeya set. 

To verify the set is sticky, we compute ${\mathcal A}_{\mathbb B}$ as 
$$
{\mathcal A}_{\mathbb B} = \bigcup_{a\in \frac{1}{2}C_{d-1}}  \left(C_{d-1}-a\right)\times \{a\}.
$$
This set is bi-Lipschitz equivalent to $C_{d-1}\times C_{d-1}$ via the bi-Lipschitz map $(x,y)\mapsto (y-2x,2x)$, and it has Hausdorff dimension $d-1$, since $\pdim(C_0) = \hdim(C_0) = 1/2$.

\begin{figure}
\centering
\tikzset{every picture/.style={line width=0.75pt}} %set default line width to 0.75pt        

\tikzset{every picture/.style={line width=0.75pt}} %set default line width to 0.75pt        

\begin{tikzpicture}[x=0.75pt,y=0.75pt,yscale=-1,xscale=1]
%uncomment if require: \path (0,505); %set diagram left start at 0, and has height of 505

%Shape: Rectangle [id:dp4385171703981954] 
\draw  [fill={rgb, 255:red, 214; green, 214; blue, 214 }  ,fill opacity=1 ] (247.38,123.89) -- (415.15,200.84) -- (206.48,308.22) -- (38.71,231.27) -- cycle ;
%Straight Lines [id:da7952143314170201] 
\draw    (278.38,52.86) -- (96.94,146.21) ;
%Straight Lines [id:da9301218193971339] 
\draw    (314.74,69.53) -- (133.3,162.89) ;
%Straight Lines [id:da6908080081184799] 
\draw    (351.1,86.21) -- (169.66,179.56) ;
%Straight Lines [id:da7505543971797688] 
\draw    (196.8,59.45) -- (342.24,126.15) ;
%Straight Lines [id:da1396136671557925] 
\draw    (151.59,82.71) -- (297.02,149.42) ;
%Straight Lines [id:da1738379965995147] 
\draw    (106.37,105.98) -- (251.8,172.68) ;
%Shape: Rectangle [id:dp23641219163363858] 
\draw   (242.02,36.18) -- (387.44,102.88) -- (206.58,195.95) -- (61.16,129.25) -- cycle ;
%Straight Lines [id:da8405382368522145] 
\draw    (251.11,40.35) -- (69.67,133.7) ;
%Straight Lines [id:da583499133019863] 
\draw    (260.2,44.52) -- (78.76,137.87) ;
%Straight Lines [id:da09550289541846813] 
\draw    (269.29,48.69) -- (87.85,142.04) ;
%Straight Lines [id:da16663451852602074] 
\draw    (287.47,57.03) -- (106.03,150.38) ;
%Straight Lines [id:da7417971562756343] 
\draw    (296.56,61.2) -- (115.12,154.55) ;
%Straight Lines [id:da9213992937847957] 
\draw    (305.65,65.36) -- (124.21,158.72) ;
%Straight Lines [id:da5825862601101146] 
\draw    (323.83,73.7) -- (142.39,167.06) ;
%Straight Lines [id:da655735474594547] 
\draw    (332.92,77.87) -- (151.48,171.23) ;
%Straight Lines [id:da5001016496619856] 
\draw    (342.01,82.04) -- (160.57,175.4) ;
%Straight Lines [id:da5529243547144794] 
\draw    (360.19,90.38) -- (178.74,183.73) ;
%Straight Lines [id:da6171227174074684] 
\draw    (369.28,94.55) -- (187.83,187.9) ;
%Straight Lines [id:da9658706072749627] 
\draw    (378.37,98.72) -- (196.92,192.07) ;
%Straight Lines [id:da8509956241512591] 
\draw    (72.45,123.43) -- (217.88,190.13) ;
%Straight Lines [id:da7088621388801989] 
\draw    (83.76,117.61) -- (229.19,184.32) ;
%Straight Lines [id:da9577441930371939] 
\draw    (95.06,111.79) -- (240.49,178.5) ;
%Straight Lines [id:da5146392303977447] 
\draw    (117.67,100.16) -- (263.1,166.87) ;
%Straight Lines [id:da12487929007682685] 
\draw    (128.98,94.35) -- (274.41,161.05) ;
%Straight Lines [id:da15739938060220093] 
\draw    (140.28,88.53) -- (285.71,155.24) ;
%Straight Lines [id:da5659781281441322] 
\draw    (162.89,76.9) -- (308.32,143.6) ;
%Straight Lines [id:da7800191040290276] 
\draw    (174.2,71.08) -- (319.63,137.79) ;
%Straight Lines [id:da17541366613914144] 
\draw    (185.5,65.26) -- (330.93,131.97) ;
%Straight Lines [id:da29680354925876096] 
\draw    (208.11,53.63) -- (353.54,120.34) ;
%Straight Lines [id:da17412689866454467] 
\draw    (220.54,47.23) -- (365.98,113.94) ;
%Straight Lines [id:da07486110379055688] 
\draw    (230.72,42) -- (376.15,108.7) ;
%Shape: Square [id:dp5622640476960524] 
\draw  [fill={rgb, 255:red, 0; green, 0; blue, 0 }  ,fill opacity=1 ] (72.23,124.43) -- (80.41,128.18) -- (70.24,133.41) -- (62.06,129.66) -- cycle ;
%Shape: Square [id:dp10253972122669652] 
\draw  [fill={rgb, 255:red, 0; green, 0; blue, 0 }  ,fill opacity=1 ] (99.5,136.93) -- (107.68,140.69) -- (97.5,145.92) -- (89.33,142.17) -- cycle ;
%Shape: Square [id:dp06614941608839131] 
\draw  [fill={rgb, 255:red, 0; green, 0; blue, 0 }  ,fill opacity=1 ] (133.41,119.48) -- (141.59,123.24) -- (131.42,128.47) -- (123.24,124.72) -- cycle ;
%Shape: Square [id:dp17716040087776075] 
\draw  [fill={rgb, 255:red, 0; green, 0; blue, 0 }  ,fill opacity=1 ] (106.14,106.98) -- (114.32,110.73) -- (104.15,115.96) -- (95.97,112.21) -- cycle ;
%Shape: Square [id:dp09006275736292757] 
\draw  [fill={rgb, 255:red, 0; green, 0; blue, 0 }  ,fill opacity=1 ] (207.89,54.63) -- (216.07,58.38) -- (205.89,63.62) -- (197.71,59.86) -- cycle ;
%Shape: Square [id:dp7543550707801838] 
\draw  [fill={rgb, 255:red, 0; green, 0; blue, 0 }  ,fill opacity=1 ] (235.16,67.14) -- (243.34,70.89) -- (233.16,76.12) -- (224.98,72.37) -- cycle ;
%Shape: Square [id:dp7678157177811807] 
\draw  [fill={rgb, 255:red, 0; green, 0; blue, 0 }  ,fill opacity=1 ] (242.93,36.6) -- (251.11,40.35) -- (240.94,45.58) -- (232.76,41.83) -- cycle ;
%Shape: Square [id:dp9482464749939848] 
\draw  [fill={rgb, 255:red, 0; green, 0; blue, 0 }  ,fill opacity=1 ] (270.2,49.1) -- (278.38,52.86) -- (268.21,58.09) -- (260.03,54.34) -- cycle ;
%Shape: Square [id:dp01760520916656605] 
\draw  [fill={rgb, 255:red, 0; green, 0; blue, 0 }  ,fill opacity=1 ] (352.01,86.63) -- (360.19,90.38) -- (350.01,95.61) -- (341.83,91.86) -- cycle ;
%Shape: Square [id:dp6323298623931636] 
\draw  [fill={rgb, 255:red, 0; green, 0; blue, 0 }  ,fill opacity=1 ] (379.27,99.13) -- (387.45,102.89) -- (377.28,108.12) -- (369.1,104.37) -- cycle ;
%Shape: Square [id:dp46289787622245593] 
\draw  [fill={rgb, 255:red, 0; green, 0; blue, 0 }  ,fill opacity=1 ] (318.09,104.08) -- (326.27,107.83) -- (316.1,113.06) -- (307.92,109.31) -- cycle ;
%Shape: Square [id:dp27832395589292613] 
\draw  [fill={rgb, 255:red, 0; green, 0; blue, 0 }  ,fill opacity=1 ] (345.36,116.58) -- (353.54,120.34) -- (343.37,125.57) -- (335.19,121.82) -- cycle ;
%Shape: Square [id:dp18431498060653184] 
\draw  [fill={rgb, 255:red, 0; green, 0; blue, 0 }  ,fill opacity=1 ] (216.35,156.42) -- (224.53,160.18) -- (214.35,165.41) -- (206.18,161.66) -- cycle ;
%Shape: Square [id:dp383964781716324] 
\draw  [fill={rgb, 255:red, 0; green, 0; blue, 0 }  ,fill opacity=1 ] (243.62,168.93) -- (251.8,172.68) -- (241.62,177.92) -- (233.44,174.17) -- cycle ;
%Shape: Square [id:dp6107163211327213] 
\draw  [fill={rgb, 255:red, 0; green, 0; blue, 0 }  ,fill opacity=1 ] (182.43,173.87) -- (190.61,177.63) -- (180.44,182.86) -- (172.26,179.11) -- cycle ;
%Shape: Square [id:dp15407655827143552] 
\draw  [fill={rgb, 255:red, 0; green, 0; blue, 0 }  ,fill opacity=1 ] (209.7,186.38) -- (217.88,190.13) -- (207.71,195.37) -- (199.53,191.62) -- cycle ;
%Straight Lines [id:da7977555352836233] 
\draw    (278.38,212.86) -- (96.94,306.21) ;
%Straight Lines [id:da6142173365025637] 
\draw    (314.74,229.53) -- (133.3,322.89) ;
%Straight Lines [id:da8223086015897652] 
\draw    (351.1,246.21) -- (169.66,339.56) ;
%Straight Lines [id:da5560373970799439] 
\draw    (196.8,219.45) -- (342.24,286.15) ;
%Straight Lines [id:da07043031550199663] 
\draw    (151.59,242.71) -- (297.02,309.42) ;
%Straight Lines [id:da76538579151352] 
\draw    (106.37,265.98) -- (251.8,332.68) ;
%Shape: Rectangle [id:dp3558741262769728] 
\draw   (242.02,196.18) -- (387.44,262.88) -- (206.58,355.95) -- (61.16,289.25) -- cycle ;
%Straight Lines [id:da4632825769715515] 
\draw    (251.11,200.35) -- (69.67,293.7) ;
%Straight Lines [id:da6567530435036314] 
\draw    (260.2,204.52) -- (78.76,297.87) ;
%Straight Lines [id:da6478016812677995] 
\draw    (269.29,208.69) -- (87.85,302.04) ;
%Straight Lines [id:da7887302358223869] 
\draw    (287.47,217.03) -- (106.03,310.38) ;
%Straight Lines [id:da821348398224873] 
\draw    (296.56,221.2) -- (115.12,314.55) ;
%Straight Lines [id:da14615693630797044] 
\draw    (305.65,225.36) -- (124.21,318.72) ;
%Straight Lines [id:da057106134349989346] 
\draw    (323.83,233.7) -- (142.39,327.06) ;
%Straight Lines [id:da8915689959135584] 
\draw    (332.92,237.87) -- (151.48,331.23) ;
%Straight Lines [id:da492321202844168] 
\draw    (342.01,242.04) -- (160.57,335.4) ;
%Straight Lines [id:da9309319799738016] 
\draw    (360.19,250.38) -- (178.74,343.73) ;
%Straight Lines [id:da9981455131773134] 
\draw    (369.28,254.55) -- (187.83,347.9) ;
%Straight Lines [id:da2862153465732171] 
\draw    (378.37,258.72) -- (196.92,352.07) ;
%Straight Lines [id:da7025682949911319] 
\draw    (72.45,283.43) -- (217.88,350.13) ;
%Straight Lines [id:da8816841967211525] 
\draw    (83.76,277.61) -- (229.19,344.32) ;
%Straight Lines [id:da059130712089565884] 
\draw    (95.06,271.79) -- (240.49,338.5) ;
%Straight Lines [id:da5027972972640888] 
\draw    (117.67,260.16) -- (263.1,326.87) ;
%Straight Lines [id:da6882613001612436] 
\draw    (128.98,254.35) -- (274.41,321.05) ;
%Straight Lines [id:da933257802592077] 
\draw    (140.28,248.53) -- (285.71,315.24) ;
%Straight Lines [id:da4081763283576192] 
\draw    (162.89,236.9) -- (308.32,303.6) ;
%Straight Lines [id:da9270626808365444] 
\draw    (174.2,231.08) -- (319.63,297.79) ;
%Straight Lines [id:da6201218496676697] 
\draw    (185.5,225.26) -- (330.93,291.97) ;
%Straight Lines [id:da90973587123152] 
\draw    (208.11,213.63) -- (353.54,280.34) ;
%Straight Lines [id:da4848663853557741] 
\draw    (220.54,207.23) -- (365.98,273.94) ;
%Straight Lines [id:da5222368321653724] 
\draw    (230.72,202) -- (376.15,268.7) ;
%Shape: Square [id:dp8295999979340146] 
\draw  [fill={rgb, 255:red, 0; green, 0; blue, 0 }  ,fill opacity=1 ] (72.23,284.43) -- (80.41,288.18) -- (70.24,293.41) -- (62.06,289.66) -- cycle ;
%Shape: Square [id:dp7732887957965001] 
\draw  [fill={rgb, 255:red, 0; green, 0; blue, 0 }  ,fill opacity=1 ] (99.5,296.93) -- (107.68,300.69) -- (97.5,305.92) -- (89.33,302.17) -- cycle ;
%Shape: Square [id:dp4684434414321804] 
\draw  [fill={rgb, 255:red, 0; green, 0; blue, 0 }  ,fill opacity=1 ] (133.41,279.48) -- (141.59,283.24) -- (131.42,288.47) -- (123.24,284.72) -- cycle ;
%Shape: Square [id:dp8399873250538706] 
\draw  [fill={rgb, 255:red, 0; green, 0; blue, 0 }  ,fill opacity=1 ] (106.14,266.98) -- (114.32,270.73) -- (104.15,275.96) -- (95.97,272.21) -- cycle ;
%Shape: Square [id:dp9929999554732304] 
\draw  [fill={rgb, 255:red, 0; green, 0; blue, 0 }  ,fill opacity=1 ] (207.89,214.63) -- (216.07,218.38) -- (205.89,223.62) -- (197.71,219.86) -- cycle ;
%Shape: Square [id:dp05296172420392942] 
\draw  [fill={rgb, 255:red, 0; green, 0; blue, 0 }  ,fill opacity=1 ] (235.16,227.14) -- (243.34,230.89) -- (233.16,236.12) -- (224.98,232.37) -- cycle ;
%Shape: Square [id:dp48990995381811475] 
\draw  [fill={rgb, 255:red, 0; green, 0; blue, 0 }  ,fill opacity=1 ] (242.93,196.6) -- (251.11,200.35) -- (240.94,205.58) -- (232.76,201.83) -- cycle ;
%Shape: Square [id:dp3600160092144149] 
\draw  [fill={rgb, 255:red, 0; green, 0; blue, 0 }  ,fill opacity=1 ] (270.2,209.1) -- (278.38,212.86) -- (268.21,218.09) -- (260.03,214.34) -- cycle ;
%Shape: Square [id:dp2301854024076545] 
\draw  [fill={rgb, 255:red, 0; green, 0; blue, 0 }  ,fill opacity=1 ] (352.01,246.63) -- (360.19,250.38) -- (350.01,255.61) -- (341.83,251.86) -- cycle ;
%Shape: Square [id:dp17254749448930495] 
\draw  [fill={rgb, 255:red, 0; green, 0; blue, 0 }  ,fill opacity=1 ] (379.27,259.13) -- (387.45,262.89) -- (377.28,268.12) -- (369.1,264.37) -- cycle ;
%Shape: Square [id:dp8345229343435413] 
\draw  [fill={rgb, 255:red, 0; green, 0; blue, 0 }  ,fill opacity=1 ] (318.09,264.08) -- (326.27,267.83) -- (316.1,273.06) -- (307.92,269.31) -- cycle ;
%Shape: Square [id:dp37868149275596474] 
\draw  [fill={rgb, 255:red, 0; green, 0; blue, 0 }  ,fill opacity=1 ] (345.36,276.58) -- (353.54,280.34) -- (343.37,285.57) -- (335.19,281.82) -- cycle ;
%Shape: Square [id:dp05959137998290087] 
\draw  [fill={rgb, 255:red, 0; green, 0; blue, 0 }  ,fill opacity=1 ] (216.35,316.42) -- (224.53,320.18) -- (214.35,325.41) -- (206.18,321.66) -- cycle ;
%Shape: Square [id:dp7777412723119056] 
\draw  [fill={rgb, 255:red, 0; green, 0; blue, 0 }  ,fill opacity=1 ] (243.62,328.93) -- (251.8,332.68) -- (241.62,337.92) -- (233.44,334.17) -- cycle ;
%Shape: Square [id:dp9741351833725129] 
\draw  [fill={rgb, 255:red, 0; green, 0; blue, 0 }  ,fill opacity=1 ] (182.43,333.87) -- (190.61,337.63) -- (180.44,342.86) -- (172.26,339.11) -- cycle ;
%Shape: Square [id:dp6691449383359547] 
\draw  [fill={rgb, 255:red, 0; green, 0; blue, 0 }  ,fill opacity=1 ] (209.7,346.38) -- (217.88,350.13) -- (207.71,355.37) -- (199.53,351.62) -- cycle ;
%Straight Lines [id:da11866191602428644] 
\draw    (241.94,41.09) -- (241.94,201.09) ;
%Straight Lines [id:da6457772331710245] 
\draw    (241.94,41.09) -- (269.2,213.6) ;
%Straight Lines [id:da47481283672823726] 
\draw    (241.94,41.09) -- (234.16,231.63) ;
%Straight Lines [id:da11272117496963807] 
\draw    (241.94,41.09) -- (206.89,219.12) ;
%Straight Lines [id:da8114504417992235] 
\draw    (241.94,41.09) -- (351.01,251.12) ;
%Straight Lines [id:da12324748045137424] 
\draw    (241.94,41.09) -- (378.28,263.63) ;
%Straight Lines [id:da38954552428773825] 
\draw    (241.94,41.09) -- (344.36,281.08) ;
%Straight Lines [id:da15900842310040508] 
\draw    (242.93,36.6) -- (317.1,268.57) ;
%Straight Lines [id:da6792725207804974] 
\draw    (241.94,41.09) -- (242.62,333.43) ;
%Straight Lines [id:da7132164480955474] 
\draw    (241.94,41.09) -- (215.35,320.92) ;
%Straight Lines [id:da26216904466145186] 
\draw    (241.94,41.09) -- (208.71,350.88) ;
%Straight Lines [id:da12548218182788884] 
\draw    (241.94,41.09) -- (181.44,338.37) ;
%Straight Lines [id:da49949444684344724] 
\draw    (241.94,41.09) -- (132.42,283.98) ;
%Straight Lines [id:da6949562943369734] 
\draw    (242.93,36.6) -- (105.15,271.47) ;
%Straight Lines [id:da01646456588260936] 
\draw    (241.94,41.09) -- (71.23,288.92) ;
%Straight Lines [id:da06356652341877067] 
\draw    (241.94,41.09) -- (98.5,301.43) ;
%Straight Lines [id:da180368927478885] 
\draw    (267,52.05) -- (241.94,201.09) ;
%Straight Lines [id:da06872703877395991] 
\draw    (269.2,53.6) -- (267,212.05) ;
%Straight Lines [id:da5383849993362915] 
\draw    (269.2,53.6) -- (234.16,231.63) ;
%Straight Lines [id:da3768930398942044] 
\draw    (269.2,53.6) -- (206.89,219.12) ;
%Straight Lines [id:da4699518167783583] 
\draw    (267,52.05) -- (351.01,251.12) ;
%Straight Lines [id:da006043326922387071] 
\draw    (269.2,53.6) -- (378.28,263.63) ;
%Straight Lines [id:da6140844696025327] 
\draw    (268.21,58.09) -- (343.37,285.57) ;
%Straight Lines [id:da22948141676835687] 
\draw    (269.2,53.6) -- (317.1,268.57) ;
%Straight Lines [id:da11240741391702769] 
\draw    (269.2,53.6) -- (242.62,333.43) ;
%Straight Lines [id:da5588329673302717] 
\draw    (267,52.05) -- (215.35,320.92) ;
%Straight Lines [id:da4024335913181669] 
\draw    (269.2,53.6) -- (208.71,350.88) ;
%Straight Lines [id:da8151768709746268] 
\draw    (269.2,53.6) -- (181.44,338.37) ;
%Straight Lines [id:da5067465557093748] 
\draw    (267,52.05) -- (132.42,283.98) ;
%Straight Lines [id:da7541263774529595] 
\draw    (269.2,53.6) -- (98.5,301.43) ;
%Straight Lines [id:da36156178835376174] 
\draw    (269.2,53.6) -- (71.23,288.92) ;
%Straight Lines [id:da775068087466362] 
\draw    (269.2,53.6) -- (105.15,271.47) ;
%Straight Lines [id:da12340539947535067] 
\draw    (204,59.05) -- (241.94,201.09) ;
%Straight Lines [id:da8216838890879168] 
\draw    (206.89,59.12) -- (269.2,213.6) ;
%Straight Lines [id:da16328667291212196] 
\draw    (207.89,54.63) -- (209,217.05) ;
%Straight Lines [id:da804880644005738] 
\draw    (206.89,59.12) -- (235.16,227.14) ;
%Straight Lines [id:da4103732774088188] 
\draw    (206.89,59.12) -- (351.01,251.12) ;
%Straight Lines [id:da4900690142383961] 
\draw    (206.89,59.12) -- (377.28,268.12) ;
%Straight Lines [id:da6433784592944243] 
\draw    (206.89,59.12) -- (344.36,281.08) ;
%Straight Lines [id:da12878024262053545] 
\draw    (207.89,54.63) -- (314,269.05) ;
%Straight Lines [id:da27011057799696814] 
\draw    (206.89,59.12) -- (242.62,333.43) ;
%Straight Lines [id:da15711334995816673] 
\draw    (206.89,59.12) -- (215.35,320.92) ;
%Straight Lines [id:da7818102413910056] 
\draw    (209.5,57.05) -- (208.71,350.88) ;
%Straight Lines [id:da6905432123951648] 
\draw    (209.5,57.05) -- (181.44,338.37) ;
%Straight Lines [id:da6453205187867898] 
\draw    (206.89,59.12) -- (132.42,283.98) ;
%Straight Lines [id:da9273514713025024] 
\draw    (209.5,57.05) -- (95.5,302.55) ;
%Straight Lines [id:da4224752357443724] 
\draw    (206.89,59.12) -- (106,274.05) ;
%Straight Lines [id:da21561166199468873] 
\draw    (209.5,57.05) -- (71.23,288.92) ;
%Straight Lines [id:da4126665828256134] 
\draw    (234.16,71.63) -- (241.94,201.09) ;
%Straight Lines [id:da44868757403270365] 
\draw    (234.16,71.63) -- (269.2,213.6) ;
%Straight Lines [id:da0030686984257901617] 
\draw    (234.16,71.63) -- (206.89,219.12) ;
%Straight Lines [id:da4345170395255832] 
\draw    (234.16,71.63) -- (237,232.55) ;
%Straight Lines [id:da27154066155726553] 
\draw    (235.16,67.14) -- (347.5,250.05) ;
%Straight Lines [id:da7723174016167674] 
\draw    (234.16,71.63) -- (375.5,264.05) ;
%Straight Lines [id:da664756079248178] 
\draw    (234.16,71.63) -- (317.1,268.57) ;
%Straight Lines [id:da19376008119723176] 
\draw    (237.5,70.55) -- (344.36,281.08) ;
%Straight Lines [id:da2526241369132729] 
\draw    (234.16,71.63) -- (217.5,319.05) ;
%Straight Lines [id:da36450626532641084] 
\draw    (237.5,70.55) -- (242.62,333.43) ;
%Straight Lines [id:da24045933804337427] 
\draw    (234.16,71.63) -- (208.71,350.88) ;
%Straight Lines [id:da9477656569761405] 
\draw    (231.5,69.05) -- (177.5,339.05) ;
%Straight Lines [id:da0022691263487311097] 
\draw    (237.5,70.55) -- (107.5,270.05) ;
%Straight Lines [id:da2814043435466035] 
\draw    (237.5,70.55) -- (132.42,283.98) ;
%Straight Lines [id:da8115786944346419] 
\draw    (237.5,70.55) -- (78,287.05) ;
%Straight Lines [id:da6782092053160793] 
\draw    (235.16,67.14) -- (95.5,302.55) ;

% Text Node
\draw (434,186.05) node [anchor=north west][inner sep=0.75pt]    {$x_{d} =\lambda $};
% Text Node
\draw (434,253.05) node [anchor=north west][inner sep=0.75pt]    {$x_{d} =0$};
% Text Node
\draw (434,91.05) node [anchor=north west][inner sep=0.75pt]    {$x_{d} =1$};
\end{tikzpicture}
\caption{An illustration of ${\mathbb B}$ for $d=3$ and the slice $x_d = \lambda$. The copy of $C_{d-1}$ at $x_d = 0$ has been scaled up by a factor of 2, and only the lines emanating from one corner of the copy of $C_{d-1}$ at $x_d=1$ are shown.}
\label{Fig-2}
\end{figure}
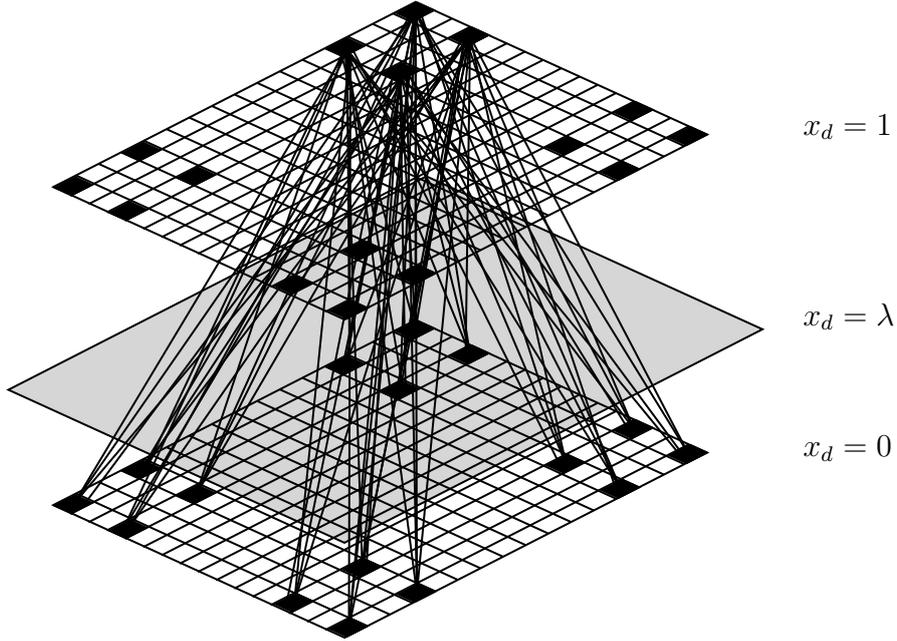

To show that the Kakeya set has Lebesgue measure zero and Hausdorff dimension $d$, we compute the slice ${\mathbb B}\cap \{x_d = \lambda\}$ for $\lambda\in(0,1)$. Indeed, 
$$
{\mathbb B}\cap \{x_d = \lambda\} = \left\{((1-\lambda)a+\lambda b,\lambda): (a,b)\in C\times C'\right\}.
$$
Letting $t = \frac{1-\lambda}{\lambda}$ and factoring out $\lambda$, the slice is a rescaling of $C_{d-1}+tC_{d-1}$. By Lemma \ref{lemma4.1} (2) and (3), the slice has Lebesgue measure zero and Hausdorff dimension $d-1$, for almost all $t\in\R$ and hence $\lambda\in(0,1)$. By Fubini's theorem, ${\mathbb B}$ has measure zero. Finally, by Marstrand's slicing theorem (see, e.g., \cite[Section 2.4]{BKS2024}), 
$$
\hdim({\mathbb B}) \ge 1+ \hdim({\mathbb B}\cap \{x_d = \lambda\}) = 1+(d-1) = d,
$$
for almost all $\lambda$.
\end{proof}

\subsection{Non-sticky Kakeya sets on $\R^d$.} We now provide a construction of a non-trivial non-sticky Kakeya set in $\R^{d}$. Let $C$ be the Cantor set on $\R^2$ from Proposition \ref{prop3.3} that leads to the non-sticky Kakeya set in $\R^2$. Unlike the sticky construction above, $C$ cannot be decomposed as the Cartesian product of two identical Cantor sets. Modifying our technique, we use $C$ as the basic set and consider the Cartesian product $C^{d-1} \subset \R^{2d-2}$ in the parameter space of lines. This time we view
$$
({\bf v}, {\bf a}) = (v_1,a_1) \times \cdots \times (v_{d_1}, a_{d-1}) \in C^{d-1}.
$$
\begin{theorem}
A finite union of rotated copies of
$$
\mathbb{K}_0 = \bigcup_{({\bf v}, {\bf a})\in C^{d-1}} L_{{\bf v}, {\bf a}}
$$
is a non-sticky Kakeya set of Lebesgue measure zero and Haudorff dimension $d$.
\end{theorem}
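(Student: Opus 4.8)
The plan is to mirror the $\R^2$ argument of Proposition~\ref{prop-K} and the sticky construction, but with the Cantor set $C$ of Proposition~\ref{prop3.3} replaced by its $(d-1)$-fold product $C^{d-1}$ in the parameter space $\R^{2d-2}$. I will verify, in order: (i) the union is a compact Kakeya set, (ii) it is non-sticky, (iii) it has Lebesgue measure zero, and (iv) it has Hausdorff dimension $d$.

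For the Kakeya property, the projection $\pi_0$ onto the $\mathbf v$-coordinates satisfies $\pi_0(C^{d-1}) = (\pi_0(C))^{d-1} = [0,1]^{d-1}$ by Proposition~\ref{prop_proj}, which has non-empty interior; compactness of $\mathbb K_0$ follows from compactness of $C^{d-1}$, so by Proposition~\ref{prop0} a finite union of rotated copies is a Kakeya set. For non-stickiness, each line $L_{\mathbf v,\mathbf a}$ with $(\mathbf v,\mathbf a)\in C^{d-1}$ contributes a unit segment to $\mathbb K_0$, so $C^{d-1}\subseteq\mathcal A_{\mathbb K_0}$ and it suffices to show $\pdim(C^{d-1})>d-1$. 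Peeling off one factor of $C$ at a time and iterating the lower bound $\pdim(E\times F)\ge\hdim(E)+\pdim(F)$ from \eqref{hdimeq2} gives $\pdim(C^{d-1})\ge(d-2)\hdim(C)+\pdim(C)=(d-2)+\pdim(C)>d-1$ by Proposition~\ref{prop3.3}.

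For the measure and dimension of the set itself I slice by the hyperplanes $x_d=t$. Since a point of $L_{\mathbf v,\mathbf a}$ at parameter $t$ is $(\mathbf a+t\mathbf v,t)$ and the pairs $(v_i,a_i)\in C$ vary independently, the slice is the product
$$
\mathbb K_0\cap\{x_d=t\}=\left(\rho_t(C)\right)^{d-1}\times\{t\},\qquad \rho_t(v,a)=a+tv,
$$
where $\rho_t$ is, up to a harmless linear rescaling, the orthogonal projection of $C$ in a direction depending on $t$. Because $0<\mathcal H^1(C)<\infty$ and the two projections $\pi_\pm$ have zero length (Proposition~\ref{prop3.3}), the Besicovitch projection theorem~\ref{BPT} yields $m^1(\rho_t(C))=0$ for almost every $t$, so one factor of the product slice vanishes; Fubini in $\R^{d-1}$ makes the slice $m^{d-1}$-null, and a second application of Fubini in $\R^d$ gives $m^d(\mathbb K_0)=0$. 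For the Hausdorff dimension, Marstrand's projection theorem gives $\hdim(\rho_t(C))=\min(1,\hdim C)=1$ for almost every $t$, so the product inequality \eqref{hdimeq1} forces $\hdim((\rho_t(C))^{d-1})=d-1$ on a positive-measure set of $t\in[0,1]$; Marstrand's slicing theorem then gives $\hdim(\mathbb K_0)\ge 1+(d-1)=d$, with the reverse inequality trivial.

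Rather than a single hard obstacle, the plan has two bookkeeping points to discharge: identifying the slice map $\rho_t$ with the family of orthogonal projections so that the ``almost every direction'' conclusions of the Besicovitch and Marstrand theorems transfer to ``almost every $t\in[0,1]$'', and checking that passing to a finite union of rotated copies neither destroys measure zero (rotations preserve null sets) nor lowers $\pdim\mathcal A$ below $d-1$ (since $\mathcal A$ of the union still contains $C^{d-1}$, the unrotated lines remaining in the union). The genuinely structural feature, which distinguishes this from the trivial Cartesian construction, is that the mismatch $\hdim C=1<\pdim C$ is exploited twice with opposite roles: the Hausdorff side of the product inequalities delivers $\hdim=d$, while the packing side delivers non-stickiness.
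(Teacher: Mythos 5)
Your proposal is correct and follows essentially the same route as the paper's own proof: Proposition \ref{prop0} with $\pi_0(C^{d-1})=[0,1]^{d-1}$ for the Kakeya property, the product inequality \eqref{hdimeq2} applied to $\mathcal A_{\mathbb K_0}=C^{d-1}$ for non-stickiness, slicing by $\{x_d=t\}$ plus the Besicovitch projection theorem and Fubini for measure zero, and Marstrand's projection theorem, \eqref{hdimeq1}, and Marstrand's slicing theorem for Hausdorff dimension $d$. One small point in your favor: your iterated bound $\pdim(C^{d-1})\ge (d-2)\hdim(C)+\pdim(C)=(d-2)+\pdim(C)$ is exactly what \eqref{hdimeq2} yields, whereas the paper's intermediate inequality $1+(d-2)\pdim(C)\le\pdim(C^{d-1})$ would require a superadditivity of packing dimension that \eqref{hdimeq2} does not provide; both versions give the needed conclusion $\pdim(C^{d-1})>d-1$.
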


\begin{proof}
By Proposition \ref{prop_proj}, $\pi_0(C)$ has non-empty interior in $\R^1$, so under the projection
$$
\widetilde{\pi}_0(v_1,a_1, \dots, v_{d-1},a_{d-1}) = (v_1, \dots, v_{d-1}),
$$
the interior
$$
[\widetilde{\pi}_0(C^{d-1})]^{\mathrm{o}} = \pi_0(C)^{\mathrm{o}}\times\cdots\times \pi_0(C)^{\mathrm{o}} \neq \varnothing.
$$
By Proposition \ref{prop0}, modified to use $\Tilde{\pi}_0$ in place of $\pi_0$, a finite union of rotated copies of $\mathbb{K}_0$ is a Kakeya set in $\R^{d}$. By the product inequality for packing dimension \eqref{hdimeq2} and Proposition \ref{prop3.3} (2),
$$d - 1 < 1 + (d-2)\cdot\pdim(C) \le \pdim(C^{d-1}) \leq (d-1)\cdot\pdim(C),$$
so since $\mathcal{A}_{\mathbb{K}_0} = C^{d-1}$,
the Kakeya set is non-sticky.

For $i= 1, \dots, d-1$, define $e_i = (0, \dots, 0, t, 1, 0, \dots, 0)$, where $t$ appears in the $(2i-2)^{\mathrm{nd}}$ entry and 1 appears in the $(2i-1)^{\mathrm{st}}$ entry.  Note that 
$$
 L_{{\bf v}, {\bf a}} = \{(tv_1+a_1,\cdots, tv_{d-1}+a_{d-1},t): t\in[0,1]\}
$$Then the slice of $\mathbb{K}_0$ at $z = t$ is exactly the following $d-1$ product:
$$
W_t = \pi_t(C)\times\cdots\times \pi_t(C).
$$
Since $m^1(\pi_t(C)) = 0$, for almost all $t \in \R$, by Fubini's theorem, $W_t$ has $(d-1)$-dimensional Lebesgue measure zero,
for almost all $t \in \R$. Then by Fubini's theorem again, $\mathbb{K}_0$ has measure zero, so the Kakeya set also has measure zero.

Finally, we show that this Kakeya set has full Hausdorff dimension. As $\hdim C = 1$, $\hdim(\pi_t (C)) = 1$, for almost every $t$, by Marstrand's projection theorem.   By the product inequality for Hausdorff dimension (\ref{hdimeq1}), $\hdim W_t\ge d-1$. By Marstrand's slicing theorem, $\hdim({\mathbb K}_0) = d$, so the Kakeya set has Hausdorff dimension $d$.
\end{proof}

We remark that we can always produce a Kakeya set in $\R^d$ using the above procedure when we can construct $C$ in $\R^2$ satisfying Proposition \ref{prop-K} with (1), (3) and (4). It will be non-sticky if we also have (2). 

\medskip

\noindent {\bf Acknowledgments.} The authors would like to thank professor Izabella {\L}aba for some helpful discussions in the early stage of the project, Joshua Zahl for suggesting a way to construct a high dimensional non-sticky Kakeya set, and Ying Xiong for helping us with the proof of Proposition \ref{prop-fractal-cube}. The work was initiated when the first author was visiting The Chinese University of Hong Kong. He would like to thank professor De-Jun Feng for making the visit happen. Chun-Kit Lai is partially supported by the AMS-Simons Research Enhancement Grants for Primarily Undergraduate Institution (PUI) Faculty.

\bibliographystyle{plainurl}
\bibliography{refs_plan}

\end{document}